\documentclass[11pt]{article}
\usepackage{a4wide}
\usepackage{amsmath}
\usepackage{amsfonts}
\usepackage{amsthm}
\usepackage{amssymb}
\usepackage[all]{xy}
\usepackage{flafter}
\usepackage[utf8]{inputenc}
\usepackage{multirow}
\usepackage{enumitem}
\usepackage{color}
\setlength{\parindent}{0cm}
\newcommand{\N}{{\mathbb N}}
\newcommand{\Z}{{\mathbb Z}}
\newcommand{\Q}{{\mathbb Q}}
\newcommand{\R}{{\mathbb R}}
\newcommand{\C}{{\mathbb C}}
\newcommand{\F}{{\mathbb F}}
\newcommand{\semi}{{\mathbb o}}
\DeclareMathOperator{\lie}{{\mathfrak g}}

\DeclareMathOperator{\Fix}{{\rm Fix}}
\DeclareMathOperator{\Aut}{{\rm Aut}}

\DeclareMathOperator{\Aff}{{\rm Aff}}
\DeclareMathOperator{\Sp}{{\rm sp}}
\DeclareMathOperator{\aff}{{\rm aff}}
\DeclareMathOperator{\Endo}{Endo}
\DeclareMathOperator{\Dim}{{\rm dim}}
\DeclareMathOperator{\HPer}{{\rm HPer}}
\DeclareMathOperator{\Per}{{\rm Per}}
\DeclareMathOperator{\Tr}{{\rm Tr}}

\DeclareMathOperator{\dd}{\delta}
\DeclareMathOperator{\D}{{\mathfrak D}}
\DeclareMathOperator{\A}{{\mathfrak A}}
\DeclareMathOperator{\B}{{\mathfrak B}}

\newtheorem{theorem}{Theorem}[section]
\newtheorem{proposition}[theorem]{Proposition}
\newtheorem{lemma}[theorem]{Lemma}
\newtheorem{corollary}[theorem]{Corollary}
\newtheorem{definition}[theorem]{Definition}
\newtheorem{remark}[theorem]{Remark}

\newcommand{\ds}{\displaystyle}
\newcommand{\ra}{\rightarrow}

\makeatletter
\def\blfootnote{\xdef\@thefnmark{}\@footnotetext}
\makeatother
\begin{document}
\title{\bf A note on homotopy minimal periods for hyperbolic maps on infra-nilmanifolds}
\author{Karel Dekimpe and Gert-Jan Dugardein\thanks{Research supported by the research fund of the KU Leuven}\\
KU Leuven Kulak, E.\ Sabbelaan 53, B-8500 Kortrijk}
\date{\today}
\maketitle

\begin{abstract}
In this paper, we show that for every non-nilpotent hyperbolic map $f$ on an infra-nilmanifold, the set $\HPer(f)$ is cofinite in $\N$.  This is a generalization of a similar result for expanding maps in \cite{lz07-1}. Moreover, we prove that for every nilpotent map $f$ on an infra-nilmanifold, $\HPer(f)=\{1\}$.
\end{abstract}
\section{Infra-nilmanifolds}
Let $f:X\rightarrow X$ be a map on a topological space $X$. We say that $x\in X$ is a periodic point of $f$ if $f^n(x)=x$ for some 
postive integer $n$. If this is the case, we say that this positive integer $n$ is the pure period of $x$ if $f^l(x)\neq x$ for all $l<n$. In this paper we
will study these periodic points when $X$ is an infra-nilmanifold and we will show that for a large class of maps $f$ on
such manifolds, there exists a postive integer $m$ such that any map $g$ homotopic to $f$ admits points of pure period $k$ for any 
$k\in [m,+\infty)$.  In this first section we will recall the necessary background on the class infra-nilmanifolds and their maps. 
In the next section, we give a more detailed description of the theory of fixed and periodic points. The third and last section is devoted to the 
proof of our main result.

\medskip

Every infra-nilmanifold is modeled on a connected and simply connected nilpotent Lie group. Given such a Lie group $G$, we consider its group of affine transformations
$\Aff(G)\!=\! G\semi \Aut(G)$, which admits a natural left action on the Lie group $G$:
\[ \forall (g,\alpha)\in \Aff(G),\, \forall h \in G: \;\;^{(g,\alpha)}h= g \alpha(h).\]

Note that when $G$ is abelian, $G$ is isomorphic to $\R^n$ for some $n$ and and $\Aff(G)$ is the usual affine group $\Aff(\R^n)$ with its usual action on the affine space $\R^n$. 
Let $p:\Aff(G)=G\semi \Aut(G) \to \Aut(G)$ denote the natural projection onto the second factor. 

\begin{definition} A subgroup $\Gamma \subseteq \Aff(G)$ is called almost-crystallographic if and only if $p(\Gamma)$ is finite and $\Gamma\cap G$ is 
a uniform and discrete subgroup of $G$. The finite group $F=p(\Gamma)$ is called the holonomy group of $\Gamma$.
\end{definition}

The action of $\Gamma$ on $G$ is properly discontinuous and cocompact and when $\Gamma$ is torsion-free, this action becomes a free action, from which 
we can derive that the resulting quotient space $\Gamma\backslash G$ is a compact manifold with fundamental group $\Gamma$. 

\begin{definition}
A torsion-free almost-crystallographic group $\Gamma\subseteq \Aff(G) $ 
is called an almost-Bieberbach group, and the corresponding manifold $\Gamma\backslash G$ is called an infra-nilmanifold (modeled on $G$).  
\end{definition} 

When the holonomy group is trivial, $\Gamma$ will be a lattice in $G$ and the corresponding manifold $\Gamma\backslash G$ is a nilmanifold. 
When $G$ is abelian, $\Gamma$ will be called a Bieberbach group and $\Gamma\backslash G$ a compact flat manifold. 
When $G$ is abelian and the holonomy group of $\Gamma$ is trivial, then $\Gamma$ is just a lattice in some $\R^n$ and $\Gamma\backslash G$ is a torus.

\medskip

Now, define the semigroup $\aff(G)=G\semi \Endo(G)$, where $\Endo(G)$ is the set of continuous endomorphisms of $G$. 
Note that $\aff(G)$ acts on $G$ in a similar way as $\Aff(G)$, i.e.\ any element $(\dd,\D)$ of $\aff(G)$ can be seen as a self-map of $G$:
\[ (\dd,\D): \; G \rightarrow G:\; h \mapsto \dd \D(h)\]
and we will refer to $(\dd,\D)$ as an affine map of $G$. 
One of the nice features of infra-nilmanifolds is that any map on a infra-nilmanifold is homotopic to a map which is induced by an affine 
map of $G$. One can prove this by using the following result by K.B. Lee.

\begin{theorem}[K.B.\ Lee \cite{lee95-2}]
\label{leemaps} Let $G$ be a connected and simply connected nilpotent Lie group and suppose that $\Gamma, \Gamma'\subseteq \Aff(G)$ are two almost-crystallographic groups modeled on $G$. 
Then for any homomorphism $\varphi: \Gamma\rightarrow \Gamma'$ there 
exists an element $  (\dd, \D)\in \aff(G)$ such that 
\[ \forall \gamma \in \Gamma: \; \varphi(\gamma) (\dd, \D) =  (\dd, \D) \gamma.\] 
\end{theorem}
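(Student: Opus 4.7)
The plan is to use Mal'cev rigidity on the nilpotent part of $\Gamma$ to construct the endomorphism $\D$, and then to produce the translation $\dd$ as a common fixed point of a finite affine action of the holonomy on $G$.

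First I would pass to the nilpotent cores $N:=\Gamma\cap G$ and $N':=\Gamma'\cap G$, both of which are lattices in $G$. A standard Mal'cev-rigidity argument (any normal nilpotent subgroup of $\Gamma$ has Mal'cev completion of the same dimension as $G$, and the natural map from this completion into $\Aff(G)$ must agree with the inclusion $G\hookrightarrow\Aff(G)$) identifies $N$ as the Fitting subgroup of $\Gamma$, and likewise for $N'$. After possibly replacing $N$ by the finite-index normal subgroup $\varphi^{-1}(N')\cap N$ — the final conclusion is unaffected — one may assume $\varphi(N)\subseteq N'$. Mal'cev rigidity then extends $\varphi|_N$ uniquely to a continuous Lie group endomorphism $\D_0\colon G\to G$.

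Next I would derive a twisted equivariance for $\D_0$ with respect to the holonomy. Fix $\gamma=(a,A)\in\Gamma$ and write $\varphi(\gamma)=(b,B)$. Plugging $n\in N$ into $\gamma n\gamma^{-1}=aA(n)a^{-1}$, applying $\varphi$, and extending both sides uniquely via Mal'cev yields
\[ \D_0(a)\,\D_0(A(h))\,\D_0(a)^{-1}=b\,B(\D_0(h))\,b^{-1}\qquad(h\in G). \]
I would then seek $(\dd,\D)$ in the form $\D(h):=\dd^{-1}\D_0(h)\dd$. Expanding $\varphi(\gamma)(\dd,\D)=(\dd,\D)\gamma$ and substituting the twisted equivariance, \emph{both} the translational and linear components of the required identity collapse to the single equation
\[ \D_0(a)\,\dd=b\,B(\dd)\qquad\text{for every }\gamma=(a,A)\in\Gamma. \]

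This equation depends only on the $N$-coset of $\gamma$, and a short check (again using the twisted equivariance) shows that $\gamma\mapsto\alpha_\gamma$ with $\alpha_\gamma(h):=\D_0(a)^{-1}\,b\,B(h)$ is a group action of $\Gamma$ on $G$ by affine transformations, factoring through the finite holonomy $F=\Gamma/N$. A solution $\dd$ of the equation above is precisely a fixed point of this affine $F$-action, and the existence of such a fixed point is granted by the classical fact that any finite subgroup of $\Aff(G)$ has a fixed point on $G$ when $G$ is a simply connected nilpotent Lie group.

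The main obstacle is this final fixed-point statement. In a vector space a finite orbit can be averaged directly, but in a non-abelian nilpotent group there is no intrinsic averaging. One argues inductively along the descending central series of $G$: each successive abelian quotient carries a linear $F$-action in which averaging produces a fixed class, which is then lifted stage by stage through $\exp$. Verifying the cocycle identity that turns $\{\alpha_\gamma\}$ into a genuine group action is a short but essential consistency check resting on the twisted equivariance.
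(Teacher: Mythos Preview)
The paper does not prove this theorem at all: it is quoted as a result of K.\,B.\ Lee and cited to \cite{lee95-2}, with no argument given. So there is no ``paper's own proof'' to compare against; your proposal is an attempt to supply what the paper deliberately outsources.

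That said, your outline is essentially the standard proof (close to Lee's original). The Mal'cev extension of $\varphi|_N$ to $\D_0$, the derivation of the twisted equivariance $\D_0(a)\,\D_0(A(h))\,\D_0(a)^{-1}=b\,B(\D_0(h))\,b^{-1}$, the ansatz $\D=\operatorname{conj}_{\dd^{-1}}\circ\D_0$, and the reduction of both the linear and translational equations to the single condition $\D_0(a)\,\dd=b\,B(\dd)$ are all correct. The verification that $\gamma\mapsto\alpha_\gamma$ is a genuine action reduces exactly to the twisted equivariance evaluated at $h=a_2$, and the fixed-point statement for finite subgroups of $\Aff(G)$ on a simply connected nilpotent $G$ is classical (induction along the upper or lower central series, averaging on the successive abelian quotients). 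Your handling of the possible failure of $\varphi(N)\subseteq N'$ by passing to the finite-index subgroup $\varphi^{-1}(N')\cap N$ is also fine: the Mal'cev extension $\D_0$ is unchanged, and the action still factors through a finite quotient. The one place I would tighten is the aside that $N$ is the Fitting subgroup of $\Gamma$; this is true for almost-crystallographic groups but your one-line justification is sketchy, and in any case you do not actually use it once you allow the finite-index replacement.
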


Note that we can consider the equality $ \varphi(\gamma) (\dd, \D) =  (\dd, \D) \gamma$ in $\aff(G)$, since $\Aff(G)$ is contained in $\aff(G)$. With this equality in mind, it is easy to see that the affine map $(\dd,\D)$ induces a well-defined map \[\overline{(\dd,\D)}: \Gamma \backslash G \rightarrow \Gamma' \backslash G: \; \Gamma h \rightarrow \Gamma' \dd \D(h),\]
which exactly induces the morphism $\varphi$ on the level of the fundamental groups.

\medskip

On the other hand, if we choose an arbitrary map $f:\Gamma\backslash G\ra \Gamma'\backslash G$ between two infra-nilmanifolds and choose a lifting $\tilde{f}:G \to G$ of $f$, then there exists a morphism $\tilde{f}_\ast:\Gamma\to \Gamma'$ such that $\tilde{f}_\ast(\gamma) \circ \tilde{f} = \tilde{f}\circ \gamma$, for all $\gamma\in \Gamma$. By Theorem~\ref{leemaps}, an affine map $(\dd,\D)\in \aff(G)$ exists which also satisfies $\tilde{f}_\ast(\gamma) \circ (\dd,\D)= (\dd,\D)\circ \gamma$ for all $\gamma\in \Gamma$. Therefore, the induced map $\overline{(\dd,D)} $ and 
$f$ are homotopic. We will call $(\dd,\D)$ an affine homotopy lift of $f$. 


\medskip

We end this introduction about infra-nilmanifolds with the definition of a hyperbolic map on an infra-nilmanifold. 
We will denote by $\D_\ast$ the Lie algebra endomorphism induced by $\D$ on the Lie algebra $\lie$ associated to $G$.

\begin{definition}
Let $M$ be an infra-nilmanifold and $f:M\to M$ be a continuous map, with $(\dd,\D)$ as an affine homotopy lift. We say that $f$ is a hyperbolic map if $\D_\ast$ has no eigenvalues of modulus $1$.
\end{definition}

\begin{remark}
The map $\D$, and hence also $\D_\ast$ depends on the choice of the lift $\tilde{f}$. Once the lift $\tilde{f}$ is fixed, and hence the 
morphism $\tilde{f}_\ast$ is fixed, the $\D$--part of the map $(\dd,\D)$ in Theorem~\ref{leemaps} is also fixed (although the $\dd$--part is not unique in general).
It follows that $f$ determines $\D$ only up to an inner automorphism of $G$. But as inner automorphisms have no effect on the eigenvalues of $\D_\ast$
(in case of a nilpotent Lie group $G$) the notion of a hyperbolic map is well defined.
\end{remark}

Two important classes of maps on infra-nilmanifolds which are hyperbolic are the expanding maps and the Anosov diffeomorphisms.

\begin{remark}\label{RemarkHyp}
Due to Lemma 4.5. in \cite{ddm05-1}, it is known that every nowhere expanding map on an infra-nilmanifold only has eigenvalues $0$ or eigenvalues of modulus $1$. This means that every hyperbolic map for which $\D_\ast$ is not nilpotent has an eigenvalue of modulus strictly bigger than $1$.
\end{remark}

\section{Nielsen theory, dynamical zeta functions and $\HPer(f)$}
Let $f:X\to X$ be self-map of a compact polyhedron $X$. There are different ways to assign integers to this map $f$ that give information about the fixed points of $f$. One of these integers is the Lefschetz number $L(f)$ which is defined as 
\[ L(f)= \sum_{i=0}^{{\Dim}\;X} (-1)^i {\Tr} \left( f_{\ast,i} : H_i(X,\R) \to H_i(X,\R)\right).\]

In our situation, the space $X=\Gamma \backslash G$ will be a infra--nilmanifold, which is an aspherical space, and hence the (co)homology of the space 
$X=\Gamma\backslash G$ equals the (co)homology of the group $\Gamma$. It follows that in this case we have (see also \cite[page 36]{jm06-1})
\begin{eqnarray*}
  L(f) & = &  \sum_{i=0}^{{\Dim}\;X} (-1)^i {\Tr} \left( f_{\ast,i} : H_i(\Gamma,\R) \to H_i(\Gamma,\R)\right) \\
       & = & \sum_{i=0}^{{\Dim}\;X} (-1)^i {\Tr} \left( f^\ast_i : H^i(\Gamma,\R) \to H^i(\Gamma,\R)\right)
\end{eqnarray*}

The Lefschetz fixed point theorem states that if $L(f)\neq 0$, then $f$ has at least one fixed point. Because the Lefschetz number is only defined in terms of 
(co)homology groups, it remains invariant under a homotopy and hence, if $L(f)\neq 0$, the Lefschetz fixed point theorem guarantees that any map homotopic to $f$ also 
has at least one fixed point.

\medskip

Another integer giving information on the fixed points of $f$ is the Nielsen 
number $N(f)$. It is a homotopy-invariant lower bound for the number of fixed points of $f$. To define $N(f)$, fix a reference lifting $\tilde{f}$ of $f$ with respect to a universal cover $(\tilde{X},p)$ of $X$ and denote the group of covering transformations by $\mathcal{D}$. For $\alpha\in \mathcal{D}$, the sets $p(\Fix(\alpha \circ \tilde{f}))$ form a partition of the fixed point set $\Fix(f)$. These sets are called fixed point classes. By using the fixed point index, we can assign an integer to each fixed point class in such a way that if a non-zero integer is assigned, the fixed point class cannot completely vanish under a homotopy. Such a non-vanishing fixed point class will be called essential and $N(f)$ is defined as the number of essential fixed point classes of $f$.

\medskip

By definition, it is clear that $N(f)$ will indeed be a homotopy-invariant lower bound for the number of fixed points of $f$. Hence, in general, $N(f)$ will give more information about the fixed points of $f$ than $L(f)$. The downside, however, is that Nielsen numbers are often much harder to compute than Lefschetz numbers, because the fixed point index can be a tedious thing to work with. Luckily, on infra--nilmanifolds there exists an algebraic formula to compute $N(f)$, which makes them 
a convenient class of manifolds to study Nielsen theory on. More information on both $L(f)$ and $N(f)$ can be found in e.g. \cite{brow71-1}, \cite{jian83-1} and \cite{kian89-1}.

\medskip

By using the Lefschetz and Nielsen numbers of iterates of $f$ as coefficients, it is possible to define the so-called dynamical zeta functions. The Lefschetz zeta function was introduced by S.~Smale in \cite{smal67-1}:\[ L_f(z)=\exp\left( \sum_{k=1}^{+\infty}\frac{L(f^k)}{k}z^k\right).\]In his paper, Smale also proved that the Lefschetz zeta function is always rational for self-maps on compact polyhedra. 

\medskip

The proof is actually quite straightforward. Let the $\lambda_{ij}$'s denote 
the eigenvalues of  $f_{\ast}^i:H^i(X,\R)\rightarrow H^i(X,\R)$, with $j\in \{1,\dots, \Dim(H^i(X,\R))\}$. Because the trace of a matrix is the sum of the eigenvalues, we find $$L_f(z)=\exp\left( \sum_{k=1}^{+\infty}\left(\sum_{i=0}^{\Dim\;X}(-1)^i\sum_{j=1}^{\Dim\;H^i(X)}\lambda_{ij}^k\right)\frac{z^k}{k}\right).$$By reordering the terms and by using the fact that $$\sum_{k=1}^{+\infty}\frac{a^kz^k}{k}=-\log(1-az), \textrm{ for } |z|<|a|^{-1},$$it is easy to derive that \begin{equation}\label{eqLfz}
L_f(z)=\prod_{i=0}^{\Dim\;X}\prod_{j=1}^{\Dim\;H^i(X)}(1-\lambda_{ij}z)^{(-1)^{i+1}}.
\end{equation}

\begin{remark}\label{voor in rode deel} Suppose that $\Lambda$ is a lattice of a connected and simply connected nilpotent Lie group $G$ and 
$f:\Lambda \backslash G \rightarrow \Lambda\backslash G$ is a self map of the nilmanifold $\Lambda \backslash G$ with affine homotopy lift $(\dd,\D)$. Let 
$\D_\ast$ be the induced linear map on the Lie algebra ${\mathfrak g}$ of $G$ as before.
The main result of \cite{nomi54-1} states that there are natural isomorphisms
\[ H^i (\Lambda , \R) \cong H^i(\Lambda\backslash G ,\R) \cong H^i({\mathfrak g},\R).\]
The naturality of these automorphisms implies that there is a commutative diagram
\[ 
\xymatrix{ H^i (\Lambda , \R)\ar[r]^{\cong}\ar[d]_{f_i^*}  &  H^i({\mathfrak g},\R)\ar[d]^{\D_\ast^i}\\
H^i (\Lambda , \R)\ar[r]_{\cong}  &  H^i({\mathfrak g},\R)}\]
Here $\D_\ast^i$ is the map induced by $\D_\ast$ on the $i$--th cohomology space of ${\mathfrak g}$. Recall, that the cohomology of 
${\mathfrak g}$ is defined as the cohomology of a cochain complex, where the $i$--th term is Hom$(\bigwedge^i{\mathfrak g}, \R)=(\bigwedge^i{\mathfrak g})^\ast$, 
the dual space of $\bigwedge^i{\mathfrak g}$. So, 
$\D_\ast^i$ is induced by the dual map of $\bigwedge^i \D_\ast$. Since this dual map and $\bigwedge^i \D_\ast$ have the same eigenvalues, it follows 
that the set of eigenvalues of $\D_\ast^i$, hence also the set of eigenvalues $\lambda_{i,j}$ of $f_i^*$ in expression \eqref{eqLfz}, 
is a subset of the set of eigenvalues of $\bigwedge^i \D_\ast: \bigwedge^i {\mathfrak g} \rightarrow \bigwedge^i {\mathfrak g}$. 
(This fact is also reflected in the formula obtained in \cite[Theorem 23]{fels00-2})
\end{remark}

The Nielsen zeta function was introduced by A.~Fel'shtyn in \cite{fels88-1,fp85-1} and is defined in a similar way as the Lefschetz zeta function: \[ N_f(z)=\exp\left( \sum_{k=1}^{+\infty}\frac{N(f^k)}{k}z^k\right).\]It is known that this zeta function does not always have to be a rational function. A counterexample for this can be found in \cite{fels00-2}, e.g. in Remark 7.

\medskip

For self-maps on infra-nilmanifolds, however, the Nielsen zeta function will always be rational. To prove this, one can exploit the fact that $N(f)$ and $L(f)$ are very closely related. In \cite{dd13-2}, we defined a subgroup $\Gamma_+$ of $\Gamma$, which equals $\Gamma$ or is of index $2$ in $\Gamma$. The precise definition is not of major significance for the rest of this paper. However, it allowed us to write $N_f(z)$ as a function of $L_f(z)$, if $\Gamma=\Gamma_+$ and as a combination of $L_f(z)$ and $L_{f_+}(z)$, if $[\Gamma:\Gamma_+]=2$. Here, $f_+:\Gamma_+\backslash G\to\Gamma_+\backslash G$ is a lift of $f$ to the $2$-folded covering space $\Gamma_+\backslash G$ of $\Gamma\backslash G$. The following theorem, together with the fact that Lefschetz zeta functions are always rational, therefore proves the rationality of Nielsen zeta functions for infra-nilmanifolds.

\begin{theorem}\cite[Theorem 4.6]{dd13-2}\label{Nielsenzeta}
Let $M=\Gamma\backslash G$ be an infra-nilmanifold and let $f:M\to M$ be a self-map with affine homotopy lift $(\dd,\D)$. Let $p$ denote the number of positive real eigenvalues of $\D_\ast$ which are strictly greater than $1$ and 
let $n$ denote the number of negative real eigenvalues of $\D_\ast$ which are strictly less than $-1$. Then we have the following table of equations:
\renewcommand{\arraystretch}{1.9}
\begin{center}\begin{tabular}{ |c|c|c|c|c| }
\cline{2-5}
\multicolumn{1}{l}{ }
 &  \multicolumn{1}{|c|}{$p$ even, $n$ even}
 & \multicolumn{1}{|c|}{$p$ even, $n$ odd}& \multicolumn{1}{|c|}{$p$ odd, $n$ even}& \multicolumn{1}{|c|}{$p$ odd, $n$ odd} \\
\cline{1-5}
$\Gamma= \Gamma_+$ & $N_f(z)= L_f(z) $& $N_f(z)= \ds \frac{1}{L_{f}(-z)} $ &$N_f(z)= \ds \frac{1}{L_{f}(z)}$ & $N_f(z)= L_f(-z) $\\[1ex]
\cline{1-5}
$\Gamma\neq \Gamma_+$ &$N_f(z)= \ds \frac{L_{f_+}(z)}{L_{f}(z)}$ & $N_f(z)= \ds \frac{L_{f}(-z)}{L_{f_+}(-z)}$ &$N_f(z)= \ds \frac{L_{f}(z)}{L_{f_+}(z)}$ & $N_f(z)= \ds \frac{L_{f_+}(-z)}{L_{f}(-z)}$ \\[1ex]
\cline{1-5}
\end{tabular}
 \end{center}
\end{theorem}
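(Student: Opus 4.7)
My approach is to reduce each of the eight zeta-function identities in the table to a coefficient-wise statement comparing $N(f^k)$ with the appropriate Lefschetz numbers, and then to feed that comparison into the exponential generating function. The starting point is the Anosov-type algebraic formula that expresses $N(f^k)$ on an infra-nilmanifold with holonomy $F$ and affine homotopy lift $(\dd,\D)$ as an average over $F$ of absolute-value determinants $|\det(I-A_\ast\D_\ast^k)|$, while $L(f^k)$ is the closely related average of the signed determinants $\det(I-A_\ast\D_\ast^k)$. Summand-by-summand, the two quantities differ only by the sign $\epsilon(A,k)=\mathrm{sign}\det(I-A_\ast\D_\ast^k)$.

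Next I would analyse $\epsilon(A,k)$. Only real eigenvalues of modulus $>1$ contribute to it, so I track what happens to the eigenvalues of $\D_\ast$ under iteration: the $p$ positive real eigenvalues $>1$ remain $>1$ for every $k$, while the $n$ negative real eigenvalues $<-1$ are $<-1$ when $k$ is odd and $>1$ when $k$ is even. A short case distinction on the parities of $p$, $n$ and $k$ yields a global sign $\epsilon(p,n,k)\in\{\pm 1\}$, and these four patterns match exactly the four columns of the table.

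The last step splits by row. If $\Gamma=\Gamma_+$, the $A_\ast$-twist does not affect the sign count and $\epsilon(A,k)$ is independent of $A\in F$, so $N(f^k)=\epsilon(p,n,k)\,L(f^k)$. Substituting into $\log N_f(z)=\sum_k N(f^k)z^k/k$ and using the elementary identities
\[ \exp\Bigl(\sum_{k=1}^{\infty}\frac{(-1)^k L(f^k)}{k}z^k\Bigr)=L_f(-z),\qquad \exp\Bigl(-\sum_{k=1}^{\infty}\frac{L(f^k)}{k}z^k\Bigr)=\frac{1}{L_f(z)} \]
produces the four entries of the first row. If $[\Gamma:\Gamma_+]=2$, the holonomy splits as $F=F_+\sqcup F_-$; $\epsilon(A,k)$ is constant on each coset but may differ between the two, and the standard covering-space identity identifies the $F_+$-average of $\det(I-A_\ast\D_\ast^k)$ with $L(f_+^k)$. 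One then writes $N(f^k)$ as a signed $\Q$-combination of $L(f^k)$ and $L(f_+^k)$ and exponentiates to obtain the second row.

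\textbf{Main obstacle.} The delicate point is showing that $\epsilon(A,k)$ really is constant on each coset of $F_+$ in $F$, i.e.\ that the $A_\ast$-twist cannot disturb the clean parity-based description of the sign in terms of $p$, $n$ and $k$ alone. This is essentially the defining property of the subgroup $\Gamma_+$ introduced in \cite{dd13-2}; once it is invoked, the remaining zeta-function bookkeeping is purely mechanical.
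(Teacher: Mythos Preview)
The paper does not prove this theorem; it is quoted verbatim from \cite[Theorem~4.6]{dd13-2} and used as a black box to deduce the rationality of $N_f(z)$. Consequently there is no ``paper's own proof'' to compare your proposal against.

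That said, your outline is a faithful sketch of the argument one actually finds in \cite{dd13-2}. The averaging formulas
\[
N(f^k)=\frac{1}{\#F}\sum_{A\in F}\bigl|\det(I-A_\ast\D_\ast^k)\bigr|,\qquad
L(f^k)=\frac{1}{\#F}\sum_{A\in F}\det(I-A_\ast\D_\ast^k)
\]
(from \cite{ll06-1}) are indeed the starting point, the sign analysis via the parities of $p$, $n$ and $k$ is the heart of the matter, and the subgroup $\Gamma_+$ is defined precisely so that the sign $\epsilon(A,k)$ is constant on each coset of the corresponding holonomy subgroup $F_+$. You have correctly isolated this last point as the only non-mechanical step; once it is in place, the eight table entries drop out of the formal identities you wrote down. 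One small caution: in your sign analysis the phrase ``only real eigenvalues of modulus $>1$ contribute'' is meant for the eigenvalues of $A_\ast\D_\ast^k$, not of $\D_\ast^k$ alone, and relating the two is exactly where the definition of $\Gamma_+$ (and the fact that the $A_\ast$ have finite order) enters; be careful to make that passage explicit when you write the proof out in full.
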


Moreover, this theorem also tells us that we can write $N_f(z)$ in a similar form as in equation (\ref{eqLfz}), since every Lefschetz zeta function is of this form. More information about dynamical zeta functions can be found in \cite{fels00-2}.

\medskip

Closely related to fixed point theory, is periodic point theory. We call $x\in X$ a periodic point of $f$ if there exists a positive integer $n$, such that $f^n(x)=x$. 
Of course, when $f^n(x)=x$, this does not automatically imply that the actual period of $x$ is $n$. 
E.g., it is immediately clear that every fixed point is also a periodic point of period $n$, for all $n>0$. In order to exclude these points, we define the set of 
periodic points of pure period $n$:$$P_n(f)=\{x\in X\; \|\; f^n(x)=x \textrm{ and } f^k(x)\neq x, \forall k|n\}.$$
The set of homotopy minimal periods of $f$ is then defined as the following subset of the positive integers:$$\HPer(f)=\bigcap_{f\simeq g}\{n| P_n(g)\neq \emptyset\}.$$

This set has been studied extensively, e.g. in \cite{ablss95-1} for maps on the torus, in \cite{jm02-1} for maps on nilmanifolds and in \cite{fl14-1}, \cite{lz07-1} for maps on infra-nilmanifolds.

\medskip

Just as Nielsen fixed point theory divides $\Fix(f)$ into different fixed point classes, Nielsen periodic point theory divides $\Fix(f^n)$ into different fixed point classes, for all $n>0$ and looks for relations between fixed point classes on different levels. This idea is covered by the following definition.

\begin{definition}
Let $f:X \to X$ be a self-map. If $\F_k$ is a fixed point class of $f^k$, then $\F_k$ will be contained in a fixed point class $\F_{kn}$ of $(f^k)^n$, for all $n$. We say that $\F_k$ boosts to $\F_{kn}$. On the other hand, we say that $\F_{kn}$ reduces to $\F_k$.
\end{definition}

An important definition that gives some structure to the boosting and reducing relations is the following.

\begin{definition}
A self-map $f:X\to X$ will be called essentially reducible if, for all $n,k$, essential fixed point classes of $f^{kn}$ can only reduce to essential fixed point classes of $f^k$. A space $X$ is called essentially reducible if every self-map $f:X\to X$ is essentially reducible.
\end{definition}

It can be shown that the fixed point classes for maps on infra-nilmanifolds always have this nice structure for their boosting and reducing relations.

\begin{theorem}[\cite{lz07-1}]
Infra-nilmanifolds are essentially reducible.
\end{theorem}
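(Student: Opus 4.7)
The plan is to derive the result by a direct computation of fixed-point class indices of iterates of $f$ from the affine structure, and to observe that the defining determinants force vanishing to propagate under boosting. Let $\tilde{f} = (\dd, \D) \in \aff(G)$ be an affine homotopy lift of an arbitrary self-map $f$ of $\Gamma\backslash G$; since essentiality is a homotopy invariant, we may assume $f = \overline{(\dd,\D)}$ itself.

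First I would parametrize fixed point classes of $f^k$. Since the group of covering transformations is $\Gamma$, every lift of $f^k$ to $G$ is uniquely of the form $\gamma \tilde{f}^k$ for some $\gamma \in \Gamma$, yielding the class $\mathcal{F}_k^\gamma := p(\Fix(\gamma \tilde{f}^k))$. Writing $\gamma = (a, A)\in \Gamma$, the composition $\gamma \tilde{f}^k$ is an affine self-map of $G$ with linear part $A \circ \D^k \in \Endo(G)$, hence induced Lie algebra endomorphism $A_\ast \D_\ast^k$ on $\lie$. The Anosov--Nomizu--K.B.~Lee type index formula for infra-nilmanifolds then asserts
\[ \mathrm{ind}(f^k, \mathcal{F}_k^\gamma) \;=\; \pm \det\bigl(I - A_\ast \D_\ast^k\bigr), \]
so $\mathcal{F}_k^\gamma$ is essential if and only if $1$ is not an eigenvalue of $A_\ast \D_\ast^k$.

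Next I would track the boost to level $kn$. Since the $n$-th iterate of any affine map has linear part equal to the $n$-th power of its linear part, $(\gamma \tilde{f}^k)^n$ is an affine self-map of $G$ with linear part $(A\D^k)^n$ and associated Lie algebra endomorphism $(A_\ast \D_\ast^k)^n$. Writing $(\gamma \tilde{f}^k)^n = \gamma' \tilde{f}^{kn}$ with $\gamma' \in \Gamma$, any $x \in \Fix(\gamma \tilde{f}^k)$ satisfies $\gamma' \tilde{f}^{kn}(x) = x$, so $\mathcal{F}_k^\gamma \subseteq \mathcal{F}_{kn}^{\gamma'}$; by uniqueness of the Nielsen class containing a given fixed point, $\mathcal{F}_{kn}^{\gamma'}$ is exactly the class of $f^{kn}$ to which $\mathcal{F}_k^\gamma$ boosts, and the index formula above gives $\mathrm{ind}(f^{kn}, \mathcal{F}_{kn}^{\gamma'}) = \pm \det(I - (A_\ast \D_\ast^k)^n)$.

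The conclusion is then immediate: if $\mathcal{F}_k^\gamma$ is inessential then $A_\ast \D_\ast^k$ has $1$ as an eigenvalue with some eigenvector $v$, whence $(A_\ast \D_\ast^k)^n v = v$ and $\mathcal{F}_{kn}^{\gamma'}$ is also inessential. Contrapositively, any essential class of $f^{kn}$ can only reduce to essential classes of $f^k$, which is the definition of essential reducibility. The main obstacle is justifying the index formula $\mathrm{ind}(\mathcal{F}_k^\gamma) = \pm \det(I - A_\ast \D_\ast^k)$ rigorously: one must verify that $\gamma \tilde{f}^k$ has an isolated fixed point (if any) in the class and compute its local index as the signed determinant of $I - A_\ast \D_\ast^k$, with an orientation analysis for the sign. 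This is standard material in the infra-nilmanifold Nielsen-theoretic literature (going back to Nomizu and Anosov on nilmanifolds, and K.B.~Lee in the infra-nilmanifold setting) but constitutes the real geometric content of the argument; everything else reduces to the elementary linear-algebra observation that $1 \in \mathrm{spec}(T) \Rightarrow 1 \in \mathrm{spec}(T^n)$.
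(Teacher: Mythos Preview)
The paper does not prove this statement at all; it merely quotes it as a result of Lee--Zhao \cite{lz07-1}. So there is no in-paper proof to compare against.

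Your sketch is essentially the argument one finds in the cited literature, and the overall strategy is sound: parametrize lifts of $f^k$ by $\gamma\tilde f^k$, identify the boost of $\mathcal F_k^\gamma$ with $\mathcal F_{kn}^{\gamma'}$ for $\gamma'\tilde f^{kn}=(\gamma\tilde f^k)^n$, observe that the linear part of the latter is $(A\D^k)^n$, and use that $1\in\mathrm{spec}(T)\Rightarrow 1\in\mathrm{spec}(T^n)$. One imprecision worth fixing: the index of a fixed point class here is $\mathrm{sign}\,\det(I-A_\ast\D_\ast^k)\in\{-1,0,1\}$, not $\pm\det(I-A_\ast\D_\ast^k)$ as you wrote (the latter is in general a non-integer real number). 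This does not affect your conclusion, since only the vanishing of the determinant matters for essentiality, but it is worth stating correctly. You are also right that the real content hiding behind the one-line linear-algebra step is the justification of this index formula (nondegenerate fixed point when the determinant is nonzero, index equal to the sign; index zero otherwise), which is precisely what is worked out in \cite{lz07-1} and related sources.
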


One of the consequences of having this property, is the following.

\begin{theorem}[\cite{ablss95-1}]\label{ThmABLSS}
Suppose that $f$ is essentially reducible and suppose that $$N(f^k)>\sum_{p \textrm{ prime},\, p|k} N(f^{k/p}),$$then $k\in \HPer(f)$.
\end{theorem}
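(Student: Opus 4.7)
The plan is to fix an arbitrary $g$ homotopic to $f$ and produce a point of pure period $k$ for $g$; since $g$ is arbitrary this yields $k\in\HPer(f)$. The core will be a pigeonhole count on the essential fixed point classes of $g^k$, using homotopy invariance of the Nielsen number together with the essentially reducible property (which, being a statement about how fixed point classes sit inside one another, transfers to every map in the homotopy class of $f$).

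First I would observe that, by homotopy invariance, $N(g^j)=N(f^j)$ for every positive integer $j$. In particular $g^k$ has exactly $N(f^k)$ essential fixed point classes, and for each prime $p\mid k$, $g^{k/p}$ has $N(f^{k/p})$ essential ones. Next, for each prime $p\mid k$, every fixed point class of $g^{k/p}$ boosts to a \emph{unique} class of $g^k$. Essential reducibility says that an essential class of $g^k$ can reduce only to essential classes of $g^{k/p}$, so the essential classes of $g^k$ that reduce at the level $k/p$ are exactly those receiving a boost from an essential class of $g^{k/p}$; there are at most $N(f^{k/p})$ of them.

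Summing over primes $p\mid k$, at most $\sum_{p\mid k}N(f^{k/p})$ essential classes of $g^k$ reduce at any of the levels $k/p$. The strict inequality in the hypothesis then produces an essential class $\F_k$ of $g^k$ that reduces at no level $k/p$. Since $\F_k$ is essential it is non-empty, so I pick $x\in\F_k$, whence $g^k(x)=x$. If the pure period $m$ of $x$ were a proper divisor of $k$, then $m\mid k/p$ for some prime $p\mid k$, giving $x\in\Fix(g^{k/p})$; the class of $x$ in $\Fix(g^{k/p})$ would then be contained in $\F_k$, meaning $\F_k$ does reduce at level $k/p$, a contradiction. Hence $x\in P_k(g)$, completing the argument.

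The delicate step I expect is the correct deployment of essential reducibility: without it, an essential class of $g^k$ could reduce to a non-essential class at some level $k/p$, and the Nielsen numbers $N(f^{k/p})$ in the hypothesis would not control such reductions. Essential reducibility is precisely what turns the count $N(f^k)>\sum_{p\mid k}N(f^{k/p})$ into a genuine obstruction and makes the pigeonhole argument succeed.
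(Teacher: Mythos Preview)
Your proposal is correct and follows essentially the same argument the paper sketches immediately after the statement (the paper does not give a full proof, citing \cite{ablss95-1}, but only explains the idea: essential reducibility forces any reducible essential class at level $k$ to reduce to an essential class at some level $k/p$, so the inequality produces an irreducible essential class and hence a point of pure period $k$). Your version simply fleshes this out by passing to an arbitrary $g\simeq f$ via homotopy invariance and then running the pigeonhole count, which is exactly the intended mechanism.
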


The idea of this theorem is actually quite easy to grasp. Because maps on infra-nilmanifolds are essentially reducible, every reducible essential fixed point class on 
level $k$ will reduce to an essential fixed point class on level $\frac{k}{p}$, with $p$ a prime divisor of $k$. Therefore, the condition 
$$N(f^k)>\sum_{p \textrm{ prime},\, p|k} N(f^{k/p})$$
actually tells us that there is definitely one irreducible essential fixed point class on level $k$, which means that there is at least one periodic point of pure period $k$.

\medskip

For this paper, this is all we need to know about Nielsen periodic point theory. More information about Nielsen periodic point theory in general can be found in \cite{hk97-1}, \cite{jm06-1} or \cite{jian83-1}.
\section{$\HPer(f)$ for hyperbolic maps on infra-nilmanifolds}

\subsection{Non-nilpotent case}

We begin with the following definition, which tells us something about the asymptotic behavior of the sequence $\left\{N(f^k)\right\}_{k=1}^\infty$.
\begin{definition}
The asymptotic Nielsen number of $f$ is defined as$$N^{\infty}(f)=\max\left\{1,\limsup_{k\to \infty}N(f^k)^{\frac{1}{k}}\right\}.$$
\end{definition}

By $\Sp(A)$ we mean the spectral radius of the matrix or the operator $A$. It equals the largest modulus of an eigenvalue of $A$.

\begin{theorem}\cite[Theorem 4.3]{fl13-1}\label{ThmAsymNielsen}
For a continuous map $f$ on an infra-nilmanifold, with affine homotopy lift $(\dd,\D)$, such that $\D_\ast$ has no eigenvalue $1$, we have$$N^\infty(f)=\Sp(\bigwedge \D_\ast).$$ 
\end{theorem}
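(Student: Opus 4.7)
The plan is to extract a closed form for $N(f^k)$ as an exponential polynomial in eigenvalues of $\bigwedge \D_\ast$ and read off the growth rate directly. By Theorem~\ref{Nielsenzeta}, $N_f(z)$ is a finite product of factors of the form $L_g(\pm z)^{\pm 1}$ with $g\in\{f,f_+\}$. Substituting the product expression~\eqref{eqLfz} for each Lefschetz zeta function gives
\[ N_f(z) \;=\; \prod_\mu (1-\mu z)^{e_\mu}, \qquad e_\mu\in\Z, \]
where each $\mu$ is, up to sign, a cohomology eigenvalue of $f$ or of $f_+$. Taking the logarithmic derivative of this factorisation and equating with the formal power series obtained from the defining equation of $N_f(z)$ yields $N(f^k) = -\sum_\mu e_\mu \mu^k$; after grouping equal $\mu$'s this is an exponential polynomial $\sum_j c_j \nu_j^k$ with distinct $\nu_j$ and nonzero integer coefficients $c_j$. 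A standard argument then gives $\limsup_k N(f^k)^{1/k} = \max_j |\nu_j|$.

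For the upper bound I would identify which $\mu$'s can occur. When $\Gamma$ is already a lattice in $G$, this is precisely Remark~\ref{voor in rode deel}: the cohomology eigenvalues of $f$ are a subset of those of $\bigwedge \D_\ast$. In the general infra-nilmanifold case one passes to the finite lattice cover $\Lambda\backslash G$ with $\Lambda=\Gamma\cap G$, or invokes an Anosov-type averaging formula for the Lefschetz number over the holonomy $F=p(\Gamma)$; either route shows that every cohomology eigenvalue appearing in $L_f(z)$ or $L_{f_+}(z)$ is an eigenvalue of $\bigwedge \D_\ast$, so $\max_j|\nu_j|\leq\Sp(\bigwedge\D_\ast)$.

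For the matching lower bound one must verify that the eigenvalue $\Sp(\bigwedge\D_\ast)=\prod_{|\lambda_i|>1}|\lambda_i|$, where $\lambda_1,\dots,\lambda_n$ are the eigenvalues of $\D_\ast$, really appears with nonzero coefficient. On the nilmanifold cover, the Nielsen product formula reads $N(f^k)=|\det(I-\D_\ast^k)|=\prod_i |1-\lambda_i^k|$. Factors with $|\lambda_i|>1$ behave like $|\lambda_i|^k$, factors with $|\lambda_i|<1$ tend to $1$, and factors with $|\lambda_i|=1$ oscillate in $(0,2]$. The hypothesis that no eigenvalue of $\D_\ast$ equals $1$ is precisely what prevents any factor from vanishing identically; a Weyl equidistribution argument on the subtorus generated by the unit-modulus $\lambda_i$'s then yields a subsequence $k_n\to\infty$ along which all $|1-\lambda_i^{k_n}|$ are simultaneously bounded below by a positive constant, so that $N(f^{k_n})^{1/k_n}\to\Sp(\bigwedge\D_\ast)$; the averaging formula transfers this conclusion from $\Lambda\backslash G$ back to $\Gamma\backslash G$.

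The hardest step will be this last one: simultaneously controlling the Diophantine oscillations of the unit-circle eigenvalues (which requires the equidistribution argument above) and ruling out any systematic cancellation of the dominant term $\Sp(\bigwedge\D_\ast)^k$ among the different Lefschetz contributions assembled by Theorem~\ref{Nielsenzeta}. Both issues depend crucially on the exclusion of the eigenvalue $1$.
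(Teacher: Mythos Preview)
This theorem is not proved in the paper: it is quoted as \cite[Theorem~4.3]{fl13-1} and used as a black box. There is therefore no in-paper proof to compare your proposal against.

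That said, your sketch is sound and largely recreates, in the general setting, the ingredients the paper assembles for the hyperbolic case around Lemma~\ref{lemvorm}. Your upper bound is exactly the content of Remark~\ref{voor in rode deel} together with the paragraphs following the statement of Theorem~\ref{ThmAsymNielsen}: the $\mu$'s occurring in $L_f(z)$ and $L_{f_+}(z)$ are eigenvalues of $\bigwedge\D_\ast$ (the paper passes to a fully characteristic lattice $\Lambda\subseteq\Gamma$ rather than to $\Gamma\cap G$, but the idea is the same), hence $\limsup_k N(f^k)^{1/k}\le\Sp(\bigwedge\D_\ast)$. Your lower bound via the averaging formula
\[
N(f^k)=\frac{1}{\#F}\sum_{\A\in F}\bigl|\det(I-\A_\ast\D_\ast^k)\bigr|\;\ge\;\frac{1}{\#F}\bigl|\det(I-\D_\ast^k)\bigr|
\]
is precisely the inequality exploited in Lemma~\ref{lemNfnot0}.

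Two remarks on what you call the ``hardest step.'' First, the paper never actually needs it: Theorem~\ref{ThmAsymNielsen} is only invoked for hyperbolic $f$, where $\D_\ast$ has no eigenvalue of modulus~$1$, the dominant eigenvalue of $\bigwedge\D_\ast$ is unique, and your Diophantine difficulties evaporate (this is how Lemma~\ref{lemvorm} is argued). Second, for the full statement, ``Weyl equidistribution'' is not quite the right tool. What you need is that for the unit-modulus eigenvalues $\lambda_i\neq 1$ the tuple $(\lambda_i^k)_i$ returns infinitely often to a neighbourhood of $(\lambda_i)_i$, where each $|1-\lambda_i^k|$ is bounded below by a fixed positive constant; this is plain Poincar\'e recurrence (equivalently, almost-periodicity of $k\mapsto(\lambda_i^k)_i$), and it covers the case where some $\lambda_i$ are nontrivial roots of unity, for which equidistribution in the usual sense fails. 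With that adjustment the lower-bound half of your argument goes through.
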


If $\{\nu_i\}_{i\in I}$ is the set of eigenvalues of $\D_\ast$, we know that \[\Sp(\bigwedge \D_\ast)=\left\{
\begin{array}{ccc}
\prod_{|\nu_i|>1}|\nu_i| & \textrm{ if }& \Sp(\D_\ast)>1 \\
1 & \textrm{ if } & \Sp(\D_\ast)\leq 1\end{array}\right..\]Therefore, we have the following corollary of Theorem \ref{ThmAsymNielsen}. 

\begin{corollary}
Let $f$ be a hyperbolic, continuous map on an infra-nilmanifold. Let $(\dd,\D)$ be an affine homotopy lift of $f$ en let $\{\nu_i\}_{i\in I}$ be the set of eigenvalues of $\D_\ast$. When $\D_\ast$ is not nilpotent, then$$N^\infty(f)=\prod_{|\nu_i|>1}|\nu_i|.$$
\end{corollary}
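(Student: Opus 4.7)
The plan is to unwind the corollary as an almost immediate consequence of Theorem~\ref{ThmAsymNielsen} combined with the case analysis for $\Sp(\bigwedge \D_\ast)$ stated just above it. There are really only two small things to check: that the hypothesis of Theorem~\ref{ThmAsymNielsen} is satisfied, and that the non-nilpotency assumption puts us in the first branch of the displayed formula for $\Sp(\bigwedge \D_\ast)$.

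First I would observe that since $f$ is hyperbolic, by definition $\D_\ast$ has no eigenvalues of modulus $1$. In particular $1$ itself is not an eigenvalue of $\D_\ast$, so we are entitled to apply Theorem~\ref{ThmAsymNielsen}, which yields
\[ N^\infty(f) = \Sp\bigl(\textstyle\bigwedge \D_\ast\bigr). \]

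Next I would invoke Remark~\ref{RemarkHyp}: since $f$ is hyperbolic and $\D_\ast$ is assumed not to be nilpotent, $\D_\ast$ must possess at least one eigenvalue of modulus strictly greater than $1$. Hence $\Sp(\D_\ast) > 1$, so the case distinction preceding the corollary places us in the first branch, giving
\[ \Sp\bigl(\textstyle\bigwedge \D_\ast\bigr) = \prod_{|\nu_i|>1} |\nu_i|. \]
Combining the two displayed equations yields the claimed identity.

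Since everything follows directly from results already stated, no genuine obstacle arises; the only point that deserves explicit mention is the use of the hyperbolicity hypothesis twice — once to guarantee $1 \notin \operatorname{spec}(\D_\ast)$ so that Theorem~\ref{ThmAsymNielsen} applies, and once (together with non-nilpotency, via Remark~\ref{RemarkHyp}) to ensure $\Sp(\D_\ast) > 1$ so that the product on the right is non-empty and equals $\Sp(\bigwedge \D_\ast)$.
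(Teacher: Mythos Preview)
Your proof is correct and follows essentially the same approach as the paper: invoke hyperbolicity to ensure $1$ is not an eigenvalue of $\D_\ast$ so that Theorem~\ref{ThmAsymNielsen} applies, and invoke Remark~\ref{RemarkHyp} together with non-nilpotency to ensure $\Sp(\D_\ast)>1$, placing you in the first branch of the formula for $\Sp(\bigwedge \D_\ast)$. The paper's proof is slightly terser but otherwise identical in content.
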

\begin{proof}
When $\D_\ast$ is not nilpotent, we know by Remark \ref{RemarkHyp} that $\Sp(\D_\ast)>1$. Because $f$ is hyperbolic, $1$ is certainly not an eigenvalue of $\D_\ast$ and therefore, we can use the result of Theorem \ref{ThmAsymNielsen}.
\end{proof}

Because of Theorem \ref{Nielsenzeta}, we know that $N_f(z)$ can be written as the quotient of Lefschetz zeta functions. Since every Lefschetz zeta function on a compact polyhedron is of the form $$L_f(z)=\prod_{i=1}^{m}(1-\mu_i z)^{\gamma_i},$$with $\mu_i \in \C$ and $\gamma_i\in \{1,-1\}$, the same will hold for $N_f(z)$. Also, it is easy to check that $$N_f(z)=\prod_{i=1}^{n}(1-\lambda_i z)^{-\varepsilon_i} \Rightarrow N(f^k)=\sum_{i=1}^n \varepsilon_i \lambda_i^k,$$for all $k\in \N$.

\medskip

In Remark~\ref{voor in rode deel} we already mentioned the fact that for nilmanifolds the $\mu_i$'s appearing in the expression for $L_f(z)$ are 
eigenvalues of $\bigwedge \D_\ast$. We now claim that the same holds for maps on infra--nilmanifolds. 
Consider an infra--nilmanifold $\Gamma\backslash G$ and a self-map $f$ of $\Gamma\backslash G$ with affine homotopy lift $(\dd,\D)$.  Without loss 
of generality, we may assume that $f=\overline{(\dd,\D)}$.  We now fix a fully characteristic subgroup $\Lambda$ of finite index in $\Gamma$ that is contained in $G$ (e.g.\ see \cite{ll06-1}). Hence for the induced morphism $f_\ast:\Gamma \rightarrow \Gamma$ we have that 
$f_\ast(\Lambda) \subseteq \Lambda$. It follows that $(\dd, \D)$ also induces a map $\hat{f}$ on the nilmanifold $\Lambda \backslash G$ and that 
$\hat{f}_\ast= f_{\ast|\Lambda}$.  By Theorem III 10.4 in \cite{brow82-1} we know that the restriction map induces an isomorphism 
${\rm res}:H^{i}(\Gamma,\Q) \rightarrow H^i(\Lambda,\Q)^{\Gamma/\Lambda}$. As the restriction map is natural, we obtain the following commutative diagram:
\[ 
\xymatrix{ H^{i}(\Gamma,\Q) \ar[r]^-{{\rm res}}_-{\cong}\ar[d]_{f_\ast^i}&  H^i(\Lambda,\Q)^{\Gamma/\Lambda}\ar[d]^{\hat{f}^i_\ast}\\
           H^{i}(\Gamma,\Q) \ar[r]^-{{\rm res}}_-{\cong}&  H^i(\Lambda,\Q)^{\Gamma/\Lambda}}
\]
It follows that each of the eigenvalues of $f_\ast^i$ is also an eigenvalue of $\hat{f}^i_\ast$. Since the latter ones are all eigenvalues of 
$\bigwedge^i \D_\ast$, by Remark~\ref{voor in rode deel}, it follows that also all eigenvalues of $f_\ast^i$ are eigenvalues of $\bigwedge^i \D_\ast$. This means that the $\mu_i$'s appearing in the expression for $L_f(z)$ are 
eigenvalues of $\bigwedge \D_\ast$ and of course, because $f_+$ has the same affine homotopy lift as $f$, the same applies for $L_{f_+}(z)$.

\medskip
By Theorem~\ref{Nielsenzeta}, we know that $N_f(z)$ can be written as a combination of $L_f(z)$ and possibly $L_{f_+}(z)$, or as a combination of $L_f(-z)$ and possibly $L_{f_+}(-z)$. In the first case, by the previous discussion, we see that all $\lambda_i$'s in the expression for $N_f(z)$ are eigenvalues of $\bigwedge \D_\ast$. In the latter case, all $\lambda_i$'s are the opposite of eigenvalues of $\bigwedge \D_\ast$. All in all, this means we can write $$N(f^k)=\sum_{i=1}^n \varepsilon_i \lambda_i^k,$$such that all $\lambda_i$'s or all $-\lambda_i$'s  are eigenvalues of $ \bigwedge \D_\ast$.



\begin{lemma}\label{lemvorm}
If $f$ is a non-nilpotent hyperbolic map on an infra-nilmanifold, with $(\dd,\D)$ as affine homotopy lift, it is possible to write$$N(f^k)=\sum_{i=1}^{m} a_i \lambda_i^k,$$with $a_i \in \Z$, $a_1\geq 1$ and such that $$|\lambda_1|=\lambda_1=\Sp(\bigwedge \D_\ast)>|\lambda_2|\geq \dots \geq |\lambda_m|.$$
\end{lemma}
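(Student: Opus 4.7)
The plan is to start from the expression $N(f^k)=\sum_{i=1}^n \varepsilon_i\lambda_i^k$ with $\varepsilon_i\in\{-1,+1\}$ derived in the paragraphs preceding the lemma, in which either all of the $\lambda_i$'s or all of the $-\lambda_i$'s are eigenvalues of $\bigwedge\D_\ast$. Collecting like terms yields integer coefficients $a_i\in\Z$, and the task is then to reindex so that $a_1\ge 1$ and $\lambda_1=|\lambda_1|=\Sp(\bigwedge\D_\ast)$ strictly dominates the moduli of all remaining $\lambda_i$.

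The first key step is to isolate a unique dominant eigenvalue of $\bigwedge\D_\ast$. Let $\{\nu_i\}$ denote the eigenvalues of $\D_\ast$; hyperbolicity excludes $|\nu_i|=1$, and the non-nilpotence hypothesis together with Remark~\ref{RemarkHyp} forces the number $s$ of eigenvalues with $|\nu_i|>1$ to be positive. The eigenvalues of $\bigwedge^k\D_\ast$ are products of $k$-subsets of the $\nu_i$, and a short case check for $k<s$, $k=s$ and $k>s$ shows that the unique subset maximising the modulus of such a product is $\{i:|\nu_i|>1\}$. Consequently the unique eigenvalue of $\bigwedge\D_\ast$ of maximum modulus is $\rho:=\prod_{|\nu_i|>1}\nu_i$, with $|\rho|=\Sp(\bigwedge\D_\ast)=\prod_{|\nu_i|>1}|\nu_i|$. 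Since $\D_\ast$ is real, complex-conjugate pairs of $\nu_i$ contribute positive real factors to $\rho$, so $\rho$ is real with sign $(-1)^n$, where $n$ is the number of real eigenvalues of $\D_\ast$ strictly less than $-1$.

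The second step is to select the correct dominant $\lambda_i$ from the Nielsen expansion by invoking Theorem~\ref{Nielsenzeta}. When $n$ is even the Nielsen zeta function is assembled from $L_f(z)$ (and possibly $L_{f_+}(z)$), so the $\lambda_i$'s are genuine eigenvalues of $\bigwedge\D_\ast$ and $\rho>0$ appears directly as a $\lambda_i$. When $n$ is odd the formula uses $L_f(-z)$ (and possibly $L_{f_+}(-z)$), so the $\lambda_i$'s are negatives of eigenvalues of $\bigwedge\D_\ast$ and $-\rho>0$ appears as a $\lambda_i$. In either case the unique $\lambda_i$ of maximum modulus equals the positive real number $\Sp(\bigwedge\D_\ast)$, which I label $\lambda_1$, and every remaining $|\lambda_i|$ is strictly smaller.

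The final step is to verify $a_1\ge 1$. Theorem~\ref{ThmAsymNielsen} applies, since hyperbolicity excludes $1$ as an eigenvalue of $\D_\ast$, and the corollary that follows gives $N^\infty(f)=\Sp(\bigwedge\D_\ast)=\lambda_1>1$. Because every other $|\lambda_i|<\lambda_1$, the remaining terms of the expansion are $o(\lambda_1^k)$, so $N(f^k)/\lambda_1^k\to a_1$ as $k\to\infty$. The identity $\limsup_k N(f^k)^{1/k}=\lambda_1$ then rules out $a_1=0$, the non-negativity of Nielsen numbers gives $a_1\ge 0$, and integrality of $a_1$ forces $a_1\ge 1$. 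The main technical obstacle is the combinatorial bookkeeping of step one: proving that $\rho$ is a strictly dominant and simple eigenvalue of $\bigwedge\D_\ast$ and then correctly tracking its sign through the four cases of Theorem~\ref{Nielsenzeta}. Once the leading term is correctly isolated, the lower bound on $a_1$ drops out of $N(f^k)\ge 0$ and the asymptotic formula for $N^\infty(f)$.
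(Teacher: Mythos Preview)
Your argument is correct and follows the same route as the paper: identify $\rho=\prod_{|\nu_i|>1}\nu_i$ as the unique eigenvalue of $\bigwedge\D_\ast$ of maximal modulus, invoke Theorem~\ref{ThmAsymNielsen} to force it into the expansion, and use $N(f^k)\ge 0$ to pin down the leading coefficient. The one substantive difference is your second step, where you run a parity analysis through the four cases of Theorem~\ref{Nielsenzeta} to conclude that the dominant $\lambda_i$ is positive; this is correct but unnecessary, since the paper simply observes that either $a_1<0$ or $\lambda_1<0$ would make $N(f^k)$ negative for large $k$, settling both signs at once.

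One presentational point: in that second step you assert that $\rho$ (respectively $-\rho$) ``appears directly as a $\lambda_i$'' before this has been established. The discussion preceding the lemma only says that each $\lambda_i$ is $\pm$ an eigenvalue of $\bigwedge\D_\ast$, not that every such eigenvalue---in particular the dominant one---actually occurs in the expansion of $N_f(z)$. It is only your third step, via $N^\infty(f)=\lambda_1$, that rules out $a_1=0$ and hence shows the dominant term is genuinely present; the logic would be cleaner with this established before the sign discussion, as the paper does.
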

\begin{proof}
By previous arguments, we know that it is possible to write $$N(f^k)=\sum_{i=1}^n \varepsilon_i \lambda_i^k,$$where all $\lambda_i$'s or all $-\lambda_i$'s  are eigenvalues of $ \bigwedge \D_\ast$. By grouping the $\lambda$'s that appear more than once and by changing the order, we get the desired form $$N(f^k)=\sum_{i=1}^{m} a_i \lambda_i^k,$$with $a_i \in \Z$ and $|\lambda_1|\geq |\lambda_2|\geq \dots \geq |\lambda_m|.$ There is a unique eigenvalue of $\bigwedge \D_\ast$ of maximal modulus, namely the product $$\prod_{|\lambda_i|\geq 1}\lambda_i=\mu_1.$$Note that it is clearly real, because for every $\lambda \not \in \R$, we know that if $|\lambda|>1$, then $|\overline{\lambda}|>1$ and both are eigenvalues of $\bigwedge \D_\ast$, because $\D_\ast$ is a real matrix. It is unique because $f$ is hyperbolic and $\D_\ast$ has no eigenvalues of modulus $1$.

Because of Theorem \ref{ThmAsymNielsen}, we know that $N^\infty(f)=\Sp(\bigwedge \D_\ast)=|\mu_1|$. Suppose now that $\mu_1$ or $-\mu_1$ does not appear as one of the $\lambda$'s in the expression of $N(f^k)$. Then, it should still hold that $$1=\limsup_{k\to \infty} \left(\frac{\sum_{i=1}^{m} a_i \lambda_i^k}{\mu_1^k}\right)^\frac{1}{k}.$$Let $a_{\max}=\max\{|a_i|\}$, then it is easy to derive that for all $k$:$$\frac{\sum_{i=1}^{m} a_i \lambda_i^k}{\mu_1^k}\leq \sum_{i=1}^{m} |a_i| \left|\frac{\lambda_i}{\mu_1}\right|^k\leq m a_{\max}  \left|\frac{\lambda_1}{\mu_1}\right|^k.$$So, we would have that$$1\leq\limsup_{k\to \infty} \left(m a_{\max}  \left|\frac{\lambda_1}{\mu_1}\right|^k\right)^\frac{1}{k}= \left|\frac{\lambda_1}{\mu_1}\right|<1,$$where the last inequality follows from the fact that $\mu_1$ is the unique eigenvalue of maximal modulus.  Moreover, an easy argument shows that $a_1 <0$ or $\lambda_1<0$ cannot occur in the expression of $N(f^k)$, because otherwise $N(f^k)$ would be negative for sufficiently large $k$. As we already proved that $a_1=0$ is impossible, we know that $a_1\geq 1$ and that $\Sp(\bigwedge \D_\ast)$ will appear as one of the $\lambda$'s in the expression for $N(f^k)$.
\end{proof}

\begin{remark}
The fact that $\Sp(\bigwedge \D_\ast)$ has to appear in the expression for $N(f^k)$ was proved in a more general setting in \cite{fl14-1}.
\end{remark}

\begin{lemma}\label{lemNfnot0}
When $f$ is a hyperbolic map on an infra--nilmanifold, then $N(f^k)\neq 0$, for all $k>0$.
\end{lemma}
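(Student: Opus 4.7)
The plan is to bound $N(f^k)$ from below using the Anosov-type averaging formula for Nielsen numbers on infra-nilmanifolds. For a self-map $f$ of $\Gamma\backslash G$ with affine homotopy lift $(\dd,\D)$ and holonomy group $F$, this formula expresses
\[
N(f^k)\;=\;\frac{1}{|F|}\sum_{A\in F}\bigl|\det(I-A_*\D_\ast^k)\bigr|,
\]
where $A_*$ denotes the differential on $\mathfrak{g}$ of $A\in F\subseteq\Aut(G)$. One way to obtain it is to combine Theorem~\ref{Nielsenzeta} with the classical Anosov identity $N(\hat f^m)=|L(\hat f^m)|=|\det(I-\D_\ast^m)|$ on the nilmanifold cover $\Lambda\backslash G$; alternatively, it is available in the infra-nilmanifold literature.

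Granting the formula, the rest of the proof is a one-line eigenvalue observation applied to the distinguished summand $A=\Id$, which equals $|\det(I-\D_\ast^k)|$. Hyperbolicity forces every eigenvalue $\lambda$ of $\D_\ast$ to satisfy $|\lambda|\neq 1$, and this survives iteration: $|\lambda^k|=|\lambda|^k\neq 1$, so in particular no eigenvalue of $\D_\ast^k$ equals $1$. Hence $I-\D_\ast^k$ is invertible and $|\det(I-\D_\ast^k)|>0$. The remaining $|F|-1$ summands in the average are nonnegative, being absolute values, so we conclude
\[
N(f^k)\;\ge\;\frac{1}{|F|}\bigl|\det(I-\D_\ast^k)\bigr|\;>\;0.
\]
Note that this argument makes no distinction between the nilpotent and non-nilpotent cases: if $\D_\ast$ is nilpotent then all eigenvalues of $\D_\ast^k$ vanish, and $|\det(I-\D_\ast^k)|=1$, which is of course still strictly positive.

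The main obstacle is establishing the averaging formula cleanly from the machinery developed so far. Theorem~\ref{Nielsenzeta} expresses $N_f(z)$ as a quotient of Lefschetz zeta functions of $f$ and of the lift $f_+$ to the double cover $\Gamma_+\backslash G$; translating this into a sum of absolute determinants indexed by $F$ requires comparison with the signed Lefschetz formula $L(f^k)=\frac{1}{|F|}\sum_{A\in F}\det(I-A_*\D_\ast^k)$ on the infra-nilmanifold and a careful bookkeeping using the parities $p$ and $n$ that appear in the table of Theorem~\ref{Nielsenzeta}. Once that technical step is completed, the eigenvalue argument sketched above finishes the proof.
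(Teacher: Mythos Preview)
Your argument is essentially the paper's own proof: invoke the averaging formula $N(f^k)=\frac{1}{|F|}\sum_{A\in F}|\det(I-A_*\D_\ast^k)|$, drop all but the identity term, and use hyperbolicity to see that $\det(I-\D_\ast^k)\neq 0$. The only difference is that you treat the averaging formula as an ``obstacle'' to be extracted from Theorem~\ref{Nielsenzeta}, whereas the paper simply cites it directly from the literature (Lee--Lee, \cite{ll06-1}); no detour through the zeta-function table is needed.
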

\begin{proof}
Let $(\dd,\D)$ be an affine homotopy lift of $f$ and let $F$ be the holonomy group of the infra--nilmanifold. By \cite{ll06-1}, we know that $$N(f^k)=\frac{1}{\#F}\sum_{\A\in F}|\det(I-\A_\ast\D_\ast^k)|.$$Because all the terms have a non-negative contribution to this sum, we know$$N(f^k)\geq \frac{1}{\#F} |\det(I-\D_\ast^k)|=\frac{1}{\#F} \prod_{i=1}^{n}|1-\mu_i^k|>0,$$where the $\mu_i$ are all the eigenvalues of $\D_\ast$. The last inequality follows from the fact that $f$ is hyperbolic and so there are no eigenvalues of modulus $1$.
\end{proof}

From now on, we will consider $f$ to be a hyperbolic map on an infra-nilmanifold and $N(f^k)$ to be of the form $$N(f^k)=\sum_{i=1}^{m} a_i \lambda_i^k,$$with $a_i \in \Z$, $a_1\geq 1$ and such that $$|\lambda_1|=\lambda_1=\Sp(\bigwedge \D_\ast)>|\lambda_2|\geq \dots \geq |\lambda_m|.$$For the sake of clarity, we will keep using this notation in the rest of this paragraph.
\begin{lemma}\label{lemmu}
For all $\mu$ such that $\lambda_1>\mu >1$, there exists a $k_0\in \N$, such that for all $k\geq k_0$ and for all $n\in \N$, we have the following inequality:$$N(f^{k+n})>\mu^n N(f^k).$$
\end{lemma}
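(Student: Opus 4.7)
The plan is to exploit the fact that $\lambda_1$ strictly dominates the other $|\lambda_i|$, so that for large $k$ the quantity $N(f^k)$ is very well approximated by its leading term $a_1 \lambda_1^k$, and therefore the ratio $N(f^{k+n})/N(f^k)$ is close to $\lambda_1^n$ uniformly in $n$. Concretely, I would factor out the leading exponential and write
\[
N(f^k) = \lambda_1^k\bigl(a_1 + \epsilon_k\bigr), \qquad \epsilon_k := \sum_{i=2}^m a_i \bigl(\lambda_i/\lambda_1\bigr)^k.
\]
Setting $r := \max_{i \geq 2} |\lambda_i|/\lambda_1 < 1$ and $A := \sum_{i=2}^m |a_i|$, the triangle inequality gives $|\epsilon_k| \leq A r^k$, and the same bound also controls $|\epsilon_{k+n}| \leq A r^{k+n} \leq A r^k$ for every $n \geq 0$. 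This last observation is the source of the uniformity in $n$ that the statement requires.

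Forming the ratio, for $k$ large enough that $a_1 - A r^k > 0$ (so that both the numerator and the denominator below are positive), I get
\[
\frac{N(f^{k+n})}{N(f^k)} = \lambda_1^n \cdot \frac{a_1 + \epsilon_{k+n}}{a_1 + \epsilon_k} \;\geq\; \lambda_1^n \cdot \frac{a_1 - A r^k}{a_1 + A r^k}.
\]
Set $\rho := \mu/\lambda_1 \in (0,1)$. Since $r < 1$, I can choose $k_0 \in \N$ so that $A r^k < a_1(1-\rho)/(1+\rho)$ for every $k \geq k_0$; this rearranges to $(a_1 - A r^k)/(a_1 + A r^k) > \rho$, and in particular it guarantees $N(f^k) > 0$ (which is also clear from Lemma \ref{lemNfnot0} together with $N(f^k)/\lambda_1^k \to a_1 \geq 1$). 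For such $k$ and any $n \geq 1$ the previous chain then yields
\[
N(f^{k+n}) \;>\; \rho\, \lambda_1^n\, N(f^k) \;=\; \mu\, \lambda_1^{n-1}\, N(f^k) \;\geq\; \mu^n\, N(f^k),
\]
where the final inequality uses $\lambda_1 > \mu > 0$, hence $\lambda_1^{n-1} \geq \mu^{n-1}$, and the strict inequality from the first step is preserved throughout.

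The only potentially delicate point is arranging all the bounds uniformly in $n$, but this reduces to the trivial remark that $r^{k+n} \leq r^k$ whenever $r \in (0,1)$ and $n \geq 0$; beyond that, the argument is a standard dominant-term expansion, and I do not anticipate a genuine obstacle.
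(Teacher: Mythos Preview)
Your argument is correct and follows essentially the same route as the paper: both proofs factor $N(f^k)=\lambda_1^k(a_1+\epsilon_k)$, use the geometric decay $|\epsilon_k|\le A r^k$ to bound the ratio $N(f^{k+n})/N(f^k)$ below by $\lambda_1^n\cdot\dfrac{a_1-Ar^k}{a_1+Ar^k}$, and then choose $k_0$ so that this exceeds $\mu^n$. The only cosmetic difference is in the final step: the paper fixes an $\varepsilon$ with $\lambda_1\frac{1-\varepsilon}{1+\varepsilon}\ge\mu$ and bounds by $\bigl(\lambda_1\frac{1-\varepsilon}{1+\varepsilon}\bigr)^n$, while you set $\rho=\mu/\lambda_1$ directly and use $\mu\,\lambda_1^{n-1}\ge\mu^n$; these are equivalent reformulations of the same estimate.
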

\begin{proof}
Pick $1>\varepsilon>0$, such that $$\frac{\lambda_1-\mu}{\lambda_1+\mu}\geq \varepsilon>0.$$Note that this implies that $\lambda_1\frac{1-\varepsilon}{1+\varepsilon}\geq \mu$. Now, choose $k_0\in \N$, such that for all $i\in \{2,\dots ,m\}$,$$\left|\frac{a_i}{a_1}\right|\left|\frac{\lambda_i}{\lambda_1}\right|^{k_0}< \frac{\varepsilon}{m}.$$Because of Lemma \ref{lemvorm}, we know that $|\lambda_1|>|\lambda_i|$, for all these $i$'s, so by choosing $k_0$ large enough, this will be possible. Moreover, the same inequalities will also hold for every $k\geq k_0$.
\medskip

Now, consider the fraction $$\frac{N(f^{k+n})}{N(f^k)}= \frac{a_1\lambda_1^{k+n}+\sum_{i=2}^{m} a_i \lambda_i^{k+n}}{a_1\lambda_1^{k}+\sum_{i=2}^{m} a_i \lambda_i^{k}}=\frac{\lambda_1^{n}+\sum_{i=2}^{m} \frac{a_i}{a_1} \frac{\lambda_i}{\lambda_1}^{k}\lambda_i^n}{1+\sum_{i=2}^{m} \frac{a_i}{a_1} \frac{\lambda_i}{\lambda_1}^{k}}.$$Note that $N(f^k)\neq 0$, according to Lemma \ref{lemNfnot0}, so the fraction above is well-defined. It is now easy to see that the equality above implies the following inequalities:$$\frac{N(f^{k+n})}{N(f^k)}\geq \frac{\lambda_1^{n}-\left|\sum_{i=2}^{m} \frac{a_i}{a_1} \frac{\lambda_i}{\lambda_1}^{k}\right|\lambda_1^n}{1+\left|\sum_{i=2}^{m} \frac{a_i}{a_1} \frac{\lambda_i}{\lambda_1}^{k}\right|}> \lambda_1^n\frac{1-\varepsilon}{1+\varepsilon}\geq \lambda_1^n\left(\frac{1-\varepsilon}{1+\varepsilon}\right)^n\geq \mu^n.$$
\end{proof}

\begin{corollary}\label{Cornu}
There exists a $\nu$, such that $\lambda_1>\nu >1$ and an $l_0\in \N$, such that for all $l\geq l_0$ and for all $k<l$:$$N(f^l)>\nu^{l-k}N(f^k).$$
\end{corollary}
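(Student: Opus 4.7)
The plan is to deduce the corollary from Lemma \ref{lemmu} by a two-regime argument: the lemma handles the range where $k$ is not too small, and for the finitely many remaining small values of $k$ I would exploit the fact that $N(f^l)$ grows like $a_1 \lambda_1^l$, which dominates any fixed multiple of $\nu^l$ as soon as $\nu < \lambda_1$. Concretely, I fix any $\nu \in (1,\lambda_1)$ and apply Lemma \ref{lemmu} with $\mu = \nu$ to obtain $k_0 \in \N$ such that $N(f^{k+n}) > \nu^n N(f^k)$ whenever $k \geq k_0$ and $n \geq 1$. In the range $k_0 \leq k < l$, the desired inequality is then literally the conclusion of the lemma with $n = l-k$, so this side requires no further work; any $l \geq k_0+1$ suffices.

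It remains to handle the finitely many values $1 \leq k < k_0$. Set $C := \max_{1 \leq k < k_0} \nu^{-k} N(f^k)$, which is a well-defined positive constant by Lemma \ref{lemNfnot0}. Then for each such $k$ one has the crude bound $\nu^{l-k} N(f^k) \leq C\nu^l$. On the other hand, by Lemma \ref{lemvorm} we have $N(f^l) = a_1\lambda_1^l + \sum_{i=2}^m a_i \lambda_i^l$ with $a_1 \geq 1$ and $|\lambda_i| < \lambda_1$ for $i \geq 2$, hence $\lambda_1^{-l} N(f^l) \to a_1 \geq 1$ as $l \to \infty$. Since $\lambda_1 > \nu$, the ratio $N(f^l)/(C\nu^l)$ tends to infinity, so there is an $l_1$ with $N(f^l) > C\nu^l \geq \nu^{l-k} N(f^k)$ for all $l \geq l_1$ and all $k$ in this finite range.

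Setting $l_0 := \max(k_0 + 1,\, l_1)$ then yields $N(f^l) > \nu^{l-k} N(f^k)$ for every $l \geq l_0$ and every positive $k < l$, which is exactly the statement of the corollary. There is no real obstacle here; the argument is the standard \emph{finitely many exceptional cases absorbed by asymptotic dominance} trick. The only point one must watch is that $\nu$ (and hence the constant $C$) is fixed \emph{before} $l$ is allowed to grow, so that $C$ is a genuine constant independent of $l$.
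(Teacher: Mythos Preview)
Your proof is correct and takes a route that differs slightly from the paper's. Both arguments split into the range $k\ge k_0$ (handled directly by Lemma~\ref{lemmu}) and the finitely many values $k<k_0$; the difference lies in how the small-$k$ regime is absorbed. The paper first chooses $l_0\ge k_0$ so that $N(f^{l_0})$ exceeds every earlier $N(f^l)$ (using that Nielsen numbers are positive integers and eventually strictly increasing), and then \emph{shrinks} $\nu$ below the threshold $\tau=\min_{l<l_0}\bigl(N(f^{l_0})/N(f^l)\bigr)^{1/(l_0-l)}$, so that the inequality at $l_0$ already holds and can be chained forward via Lemma~\ref{lemmu}. You instead fix $\nu\in(1,\lambda_1)$ once and for all and \emph{enlarge} $l_0$, invoking the explicit asymptotic $N(f^l)\sim a_1\lambda_1^l$ from Lemma~\ref{lemvorm} to dominate the bounded quantity $C\nu^l$. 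Either manoeuvre works; the paper's version stays self-contained in that it uses only Lemma~\ref{lemmu}, Lemma~\ref{lemNfnot0}, and integrality of Nielsen numbers as black boxes, whereas your version re-uses the structural form of $N(f^l)$ and is arguably more transparent about why the growth rate wins.
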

\begin{proof}
Fix a $\mu$ as in Lemma \ref{lemmu} and let $k_0$ be the resulting integer from this lemma. Note that Lemma \ref{lemmu} actually tells us that the sequence $\left\{N(f^k)\right\}_{k=1}^\infty$ will be strictly increasing from a certain point onwards. Because all Nielsen numbers are integers, this means that there will exist an $l_0\geq k_0$, such that $N(f^{l_0})>N(f^{l})$, for all $l<l_0$, so also for all $l<k_0$.

\medskip

Now, let us define the following number $$\tau=\min\left\{\left(\frac{N(f^{l_0})}{N(f^l)}\right)^{\frac{1}{l_0-l}}\| l<l_0 \right\}.$$It is clear that $\tau>1$. Take $\nu= \min\left\{\mu, \frac{1+\tau}{2}\right\}$. 
Clearly, $\lambda_1>\nu>1$ and for all $k<l_0$, we have the following inequalities:$$\frac{N(f^{l_0})}{N(f^k)}\geq \tau^{l_0-k}>\nu^{l_0-k}.$$
Because of Lemma \ref{lemmu} and the fact that $\mu\geq \nu$, we know this inequality also applies for all $l\geq l_0$.
\end{proof}

\begin{theorem}
If $f$ is a hyperbolic map on an infra-nilmanifold, with affine homotopy lift $(\dd,\D)$, such that $\D_\ast$ is not nilpotent, 
then there exists an integer $m_0$, such that $$[m_0,+\infty)\subset \HPer(f).$$
\end{theorem}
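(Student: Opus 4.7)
The plan is to apply the combinatorial criterion of Theorem~\ref{ThmABLSS}: since infra--nilmanifolds are essentially reducible, any positive integer $k$ satisfying $N(f^k) > \sum_{p \text{ prime},\, p \mid k} N(f^{k/p})$ automatically lies in $\HPer(f)$. Thus it suffices to prove that this strict inequality holds for every sufficiently large $k$.

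First, I would invoke Corollary~\ref{Cornu}, which is available precisely because the hypothesis that $\D_\ast$ is non-nilpotent forces $\lambda_1 = \Sp(\bigwedge \D_\ast) > 1$ (via Remark~\ref{RemarkHyp} and Lemma~\ref{lemvorm}). This produces $\nu>1$ and $l_0 \in \N$ such that $N(f^l) > \nu^{l-j} N(f^j)$ whenever $l\geq l_0$ and $j<l$. For any $k \geq l_0$ and any prime divisor $p$ of $k$, applying the corollary with $l=k$ and $j=k/p < k$ yields
\[
N(f^k) \;>\; \nu^{k - k/p}\, N(f^{k/p}) \;\geq\; \nu^{k/2}\, N(f^{k/p}),
\]
where the last step uses $1-1/p \geq 1/2$ for every prime $p$. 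All quantities are strictly positive by Lemma~\ref{lemNfnot0}, so the estimate is genuinely quantitative.

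Next, let $\omega(k)$ denote the number of distinct prime divisors of $k$ and use the trivial bound $\omega(k) \leq \log_2 k$. Summing the previous inequality over all primes $p \mid k$ gives
\[
\sum_{p \text{ prime},\, p \mid k} N(f^{k/p}) \;\leq\; \omega(k)\,\frac{N(f^k)}{\nu^{k/2}} \;\leq\; \frac{\log_2 k}{\nu^{k/2}}\, N(f^k).
\]
Since $\nu>1$, the ratio $(\log_2 k)/\nu^{k/2}$ tends to $0$, so we may fix $m_0\geq l_0$ with $\log_2 k < \nu^{k/2}$ for all $k\geq m_0$. Any such $k$ then satisfies $N(f^k) > \sum_{p\mid k} N(f^{k/p})$, so Theorem~\ref{ThmABLSS} gives $k \in \HPer(f)$, whence $[m_0,+\infty) \subseteq \HPer(f)$.

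I do not expect a serious obstacle here: all the dynamical content has been bundled into the exponential growth furnished by Corollary~\ref{Cornu}, which completely dominates the logarithmic growth of $\omega(k)$. The only subtle point is to make sure the hypotheses of Corollary~\ref{Cornu} are actually in force, which is precisely where the non-nilpotency assumption on $\D_\ast$ enters; without it we could only get $\lambda_1 \geq 1$, and the exponential comparison above would collapse.
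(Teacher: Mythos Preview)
Your proof is correct and follows essentially the same strategy as the paper: use Corollary~\ref{Cornu} to control $N(f^{k/p})$ in terms of $N(f^k)$, bound $\omega(k)$ by $\log_2 k$, and then invoke Theorem~\ref{ThmABLSS}. The only cosmetic difference is that the paper first bounds every $N(f^{m/p})$ by the single term $N(f^{\lfloor m/2\rfloor})$ (forcing an even/odd case split), whereas you apply Corollary~\ref{Cornu} directly with $l=k$ and $j=k/p$ for each prime and sum, which avoids that split; the underlying idea and the quantitative estimate are the same.
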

\begin{proof}
Choose $\nu$ and $l_0$ as in Corollary \ref{Cornu}. Since $$\lim_{k\to \infty} \frac{\nu^{2^{k-1}}}{k}=+\infty,$$we know there exists a $k_0$, such that $\forall k\geq k_0$: $\nu^{2^{k-1}}>k$. Define $m_0=\max\{2^{k_0}, 2l_0+1\}$. 

\medskip

Now, suppose that $m\geq m_0$ and $m$ is even. Let $K$ denote the number of different prime divisors of $m$. As $m\geq 2l_0+1$, we know that $\frac{m}{2}>l_0$ and hence, the result of Corollary \ref{Cornu} applies. Therefore, we have the following inequalities$$\sum_{p \textrm{ prime}, p|m} N(f^{\frac{m}{p}})\leq K\cdot N(f^{\frac{m}{2}})<\frac{K}{\nu^{\frac{m}{2}}}\cdot N(f^m).$$By Theorem \ref{ThmABLSS}, it now suffices to show that$$\frac{K}{\nu^{\frac{m}{2}}}\leq 1.$$Because $K$ denotes the number of different prime divisors of $m$, we certainly know that $m>2^K$. By the definition of $m_0$, we also know that $m\geq 2^{k_0}$. If $K\geq k_0$, then$$\nu^{\frac{m}{2}}>\nu^{2^{K-1}}>K,$$which is sufficient. If $k_0>K$, we have that$$\nu^{\frac{m}{2}}\geq \nu^{2^{k_0-1}}>k_0>K.$$So, when $m\geq m_0$ is even, $m\in \HPer(f)$.

\medskip

When $m\geq m_0$ is odd, a similar argument holds. Let $K$ again be the number of different prime divisors of $m$ and note that $m\geq 2l_0+1$ implies that 
$\frac{m-1}{2}\geq l_0$. Again, by using Corollary \ref{Cornu}, we get the following inequalities:$$\sum_{p \textrm{ prime}, p|m} N(f^{\frac{m}{p}})\leq K\cdot N(f^{\frac{m-1}{2}})<\frac{K}{\nu^{\frac{m+1}{2}}}\cdot N(f^m).$$Again, $m>2^K$ and by definition $m\geq 2^{k_0}$. When $K\geq k_0$, $$\nu^{\frac{m+1}{2}}>\nu^{\frac{2^K+1}{2}}>\nu^{2^{K-1}}>K.$$When $k_0 >K$, the same reasoning gives us$$\nu^{\frac{m+1}{2}}\geq\nu^{\frac{2^{k_0}+1}{2}}>\nu^{2^{k_0-1}}>k_0>K.$$This concludes the proof of this theorem.
\end{proof}

\begin{remark} Having obtained Lemma \ref{lemvorm}, it is also possible to prove our main theorem in an alternative way, by following the approach of 
\cite[Section 6]{fl13-1}
\end{remark}

\begin{remark}
Note that our proof also applies for every essentially irreducible map $f$ (on any manifold) for which there exists a $ \mu >1$ and a $k_0\in \N$, such that for all $k\geq k_0$ and for all $n\in \N$, we have that$$N(f^{k+n})>\mu^n N(f^k).$$This condition is therefore sufficient for $\HPer(f)$ to be cofinite in $\N$.
\end{remark}

\subsection{The nilpotent case}
For the sake of completeness, in this section we will also treat the case where $\D_\ast$ is nilpotent.

\medskip

The following two theorems can be found in \cite{dp11-1}.

\begin{theorem}\label{thmRInf}
Let $\Gamma\subseteq \Aff(G)$ be an almost-Bieberbach group with holonomy group $F\subseteq \Aut(G)$. Let $M=\Gamma\backslash G$ be the associated infra-nilmanifold.  If 
 $f:M\to M$ is a map with affine homotopy lift $(\dd, \D)$, then
\[
  R(f)=\infty \iff \exists \A \in F \text{ such that } \det(I - \A_\ast \D_\ast)=0.
\]
\end{theorem}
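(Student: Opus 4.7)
The plan is to reduce the computation of $R(f)$ to a holonomy-indexed sum of linear-algebraic quantities. First, by Theorem~\ref{leemaps}, I may assume $f=\overline{(\dd,\D)}$, so that the induced morphism $f_\ast:\Gamma\to\Gamma$ satisfies $f_\ast(\gamma)(\dd,\D)=(\dd,\D)\gamma$ in $\aff(G)$. Set $\Lambda=\Gamma\cap G$, a lattice in $G$, and recall the short exact sequence $1\to\Lambda\to\Gamma\xrightarrow{p}F\to 1$. Pick a transversal $\{\alpha_{\A}:\A\in F\}$ of $\Lambda$ in $\Gamma$ with $p(\alpha_\A)=\A$. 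A direct computation shows that the twisted conjugation $x\mapsto \gamma x f_\ast(\gamma)^{-1}$ maps the coset $\alpha_\A\Lambda$ into the coset $\alpha_{p(\gamma)\A p(f_\ast(\gamma))^{-1}}\Lambda$, so the set of $f_\ast$-Reidemeister classes splits as a disjoint union of pieces indexed by the holonomy orbits. For each $\A\in F$, let $R_\A$ denote the number of Reidemeister classes admitting a representative in $\alpha_\A\Lambda$.

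The next step is to show that $R_\A=|\det(I-\A_\ast\D_\ast)|$ when this determinant is non-zero, and $R_\A=\infty$ otherwise. Restricting twisted conjugation to $\Lambda$ acting on $\alpha_\A\Lambda$, the action becomes $\alpha_\A\mu\mapsto \lambda\,\alpha_\A\mu\,f_\ast(\lambda)^{-1}$ for $\lambda,\mu\in\Lambda$. Using the identity $f_\ast(\lambda)(\dd,\D)=(\dd,\D)\lambda$ evaluated on $\Lambda\subseteq G$, one checks that this action is conjugate to a twisted $\Lambda$-action on $\Lambda$ governed by the endomorphism obtained from $\A_\ast\circ\D$ on $G$. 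After passing through the Mal'cev correspondence, this orbit-count becomes the Reidemeister number of a linear endomorphism of the nilpotent Lie algebra $\lie$; by the standard theorem for nilmanifolds (Anosov/Fadell--Husseini and the formula used in \cite{ll06-1}), this equals $|\det(I-\A_\ast\D_\ast)|$ when this determinant is non-zero, and is infinite otherwise.

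Summing over the finitely many $\A\in F$ then yields
\[
R(f)=\sum_{\A\in F} R_\A,
\]
so $R(f)$ is finite exactly when every $R_\A$ is finite, that is, when $\det(I-\A_\ast\D_\ast)\neq 0$ for every $\A\in F$. Equivalently, $R(f)=\infty$ iff there exists $\A\in F$ with $\det(I-\A_\ast\D_\ast)=0$.

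I expect the main obstacle to be the second paragraph: translating the coset-wise twisted-conjugacy count $R_\A$ into the linear datum $\det(I-\A_\ast\D_\ast)$. This requires careful bookkeeping inside $\Gamma\subseteq G\semi\Aut(G)$, because the lift $\alpha_\A$ is only determined up to $\Lambda$, and one must verify that different choices of transversal produce the same count, as well as that the Mal'cev passage from the lattice $\Lambda$ to the Lie algebra $\lie$ correctly identifies the operator whose determinant controls the orbit count as precisely $I-\A_\ast\D_\ast$. Once that identification is pinned down, the equivalence follows immediately from the finiteness of $F$ and the classical nilmanifold result.
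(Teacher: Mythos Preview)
The paper does not prove this theorem at all: it is quoted verbatim from \cite{dp11-1} (see the sentence ``The following two theorems can be found in \cite{dp11-1}'' preceding Theorem~\ref{thmRInf}). There is therefore no in-paper argument to compare your proposal against.

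As for the proposal itself, the overall strategy---splitting $\Gamma$ into $\Lambda$-cosets indexed by $F$, reducing the twisted-conjugacy count on each coset to a Reidemeister number on the nilmanifold $\Lambda\backslash G$, and invoking the classical formula $R=|\det(I-\text{linear part})|$ there---is indeed essentially how the result is established in \cite{dp11-1}. Two points deserve care, though. First, your displayed identity $R(f)=\sum_{\A\in F}R_\A$ is not literally correct: distinct $\A$ in the same $\bar f_\ast$-twisted $F$-orbit give the \emph{same} Reidemeister classes, so summing over all of $F$ overcounts; one should sum over a transversal for the twisted $F$-orbits. Second, you define $R_\A$ as the number of $\Gamma$-Reidemeister classes meeting $\alpha_\A\Lambda$, but then compute the number of $\Lambda$-orbits on $\alpha_\A\Lambda$; the latter can be strictly larger, since elements of $\Gamma\setminus\Lambda$ may merge $\Lambda$-orbits. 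Neither issue damages the \emph{finiteness} equivalence you are after, because $[\Gamma:\Lambda]<\infty$ forces each $\Gamma$-orbit to be a finite union of $\Lambda$-orbits and each holonomy orbit is finite; but if you want the argument to be clean you should state explicitly that you are only tracking finiteness, not exact counts. With those caveats, your sketch is sound.
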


\begin{theorem}\label{thmN=R}
Let $f$ be a map on an infra-nilmanifold, such that $R(f)<\infty$, then $$N(f)=R(f).$$
\end{theorem}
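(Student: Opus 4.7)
The plan is to upgrade the universally valid inequality $N(f) \leq R(f)$ (every essential fixed point class sits in exactly one Reidemeister class, so Reidemeister classes can only be at least as numerous) to an equality under the hypothesis $R(f) < \infty$. By Theorem~\ref{thmRInf}, this hypothesis is equivalent to the condition that $\det(I - \A_\ast \D_\ast) \neq 0$ for every $\A$ in the holonomy group $F$, a condition I will exploit algebraically rather than geometrically.

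The first concrete step is to invoke the Anosov-type averaging formula for the Nielsen number on infra-nilmanifolds that was already used in the proof of Lemma~\ref{lemNfnot0}:
$$N(f) = \frac{1}{|F|}\sum_{\A \in F} \bigl|\det(I - \A_\ast \D_\ast)\bigr|.$$
Under the standing assumption each summand is strictly positive, so no collapsing occurs here. The goal then reduces to producing the matching identity
$$R(f) = \frac{1}{|F|}\sum_{\A \in F} \bigl|\det(I - \A_\ast \D_\ast)\bigr|,$$
from which $N(f) = R(f)$ is immediate.

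To obtain the formula for $R(f)$, I would use the short exact sequence $1 \to \Lambda \to \Gamma \to F \to 1$, where $\Lambda = \Gamma \cap G$ is the translation lattice of the almost-Bieberbach group. Reidemeister classes of $f_\ast$ on $\Gamma$ project to $F$-orbits, and over each $\A \in F$ the fibre identifies, after a bookkeeping step, with the set of Reidemeister classes of the twisted endomorphism $h \mapsto \A_\ast \D(h)$ on $\Lambda$. For a torsion-free finitely generated nilpotent group, whenever the corresponding $\det(I - \A_\ast \D_\ast)$ is nonzero the twisted Reidemeister count on $\Lambda$ equals $|\det(I - \A_\ast \D_\ast)|$; this is the classical Mal'cev-completion computation obtained by lifting to the Lie algebra $\lie$ and using invertibility of $I - \A_\ast \D_\ast$. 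Summing over $F$ and dividing by $|F|$ to account for the conjugation action yields the displayed formula.

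The main obstacle is the bookkeeping in the exact-sequence step: one must verify that the partition of $\Gamma$-Reidemeister classes into fibres over $F$ really produces exactly $|\det(I - \A_\ast\D_\ast)|$ classes in each fibre and that the $F$-action glues them into orbits of uniform size $|F|$. This is precisely the place where the hypothesis $R(f) < \infty$ (i.e.\ all determinants nonzero) is essential, since a vanishing determinant would create infinitely many classes in some fibre and simultaneously destroy the uniform orbit-size argument. Once this structural count is in place, comparing with the Anosov-type formula for $N(f)$ gives $N(f) = R(f)$ with no further effort.
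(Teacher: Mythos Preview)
The paper does not supply its own proof of this statement; it is quoted verbatim from the external reference \cite{dp11-1}, so there is nothing in the present paper to compare your argument against. Your outline is, in fact, essentially the strategy used in that reference: one establishes an averaging formula for $R(f)$ of the same shape as the Lee--Lee formula for $N(f)$, so that under the hypothesis $R(f)<\infty$ (equivalently, all determinants $\det(I-\A_\ast\D_\ast)$ nonzero) both quantities equal $\frac{1}{|F|}\sum_{\A\in F}|\det(I-\A_\ast\D_\ast)|$.

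One point in your sketch that would need sharpening if you were to write this out in full is the claim that ``the $F$-action glues them into orbits of uniform size $|F|$.'' The actual addition formula for Reidemeister classes through the exact sequence $1\to\Lambda\to\Gamma\to F\to 1$ is more delicate than a free $F$-action: the $\varphi$-twisted conjugacy classes of $\Gamma$ surject onto the $\bar\varphi$-twisted conjugacy classes of $F$, and the fibre over a class $[\A]$ is the set of twisted Reidemeister classes on $\Lambda$ for the endomorphism $\tau_{\A}\circ\D$, further identified under the action of the $\bar\varphi$-stabiliser of $[\A]$. Showing that this genuinely collapses to the uniform average $\frac{1}{|F|}\sum_{\A\in F}|\det(I-\A_\ast\D_\ast)|$ uses both the nonvanishing of every determinant and a careful count of how many $\A\in F$ lie in each $\bar\varphi$-class. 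You correctly flag this as the main obstacle; once it is carried out, the comparison with the Nielsen formula is indeed immediate.
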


\begin{proposition}\label{propnilp}
When $f$ is a hyperbolic map on an infra-nilmanifold with affine homotopy lift $(\dd,\D)$, such that $\D_\ast$ is nilpotent, then for all $k$$$N(f^k)=R(f^k)=1.$$ 
\end{proposition}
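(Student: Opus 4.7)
The plan is to compute $N(f^k)$ through the averaging formula
\[
N(f^k)=\frac{1}{\#F}\sum_{\A\in F}|\det(I-\A_\ast \D_\ast^k)|
\]
already recalled in the proof of Lemma \ref{lemNfnot0} (here $F$ denotes the holonomy group), and to combine this with Theorems \ref{thmRInf} and \ref{thmN=R}. Everything reduces to showing that for every $\A\in F$ and every $k\ge 1$ the operator $\A_\ast\D_\ast^k$ on $\lie$ is nilpotent: this forces $\det(I-\A_\ast\D_\ast^k)=1$ for each $\A$, and then $R(f^k)<\infty$ by Theorem \ref{thmRInf}, $N(f^k)=R(f^k)$ by Theorem \ref{thmN=R}, while the averaging formula itself collapses to $1$.

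To obtain the desired nilpotency, I would exploit the compatibility relation $\varphi(\gamma)(\dd,\D)=(\dd,\D)\gamma$ supplied by Theorem \ref{leemaps}. Projecting to the holonomy quotient produces an induced endomorphism $\bar\varphi\colon F\to F$ satisfying $\bar\varphi(\A)\circ \D=\D\circ \A$ in $\aff(G)$ for every $\A\in F$; iteration gives $\D^k\circ \A=\bar\varphi^k(\A)\circ \D^k$, so on Lie algebras
\[
\D_\ast^k\,\A_\ast=\bar\varphi^k(\A)_\ast\,\D_\ast^k.
\]
Hence the ascending filtration $V_i=\ker\D_\ast^i$ of $\lie$ is stable under every $\A_\ast$: if $v\in V_i$ then $\D_\ast^i(\A_\ast v)=\bar\varphi^i(\A)_\ast\D_\ast^i v=0$. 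Because $\D_\ast^k$ lowers this filtration by $k$ steps while each $\A_\ast$ preserves it, the composition $\A_\ast\D_\ast^k$ sends $V_i$ into $V_{i-k}$, so in any basis adapted to $V_0\subset V_1\subset\cdots\subset V_n=\lie$ it becomes strictly block upper triangular, and therefore nilpotent.

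This plan presents no serious obstacle: the one conceptual step is the observation that the commutation between $\A$ and $\D$ forces the kernel filtration of $\D_\ast$ to be holonomy-invariant, after which nilpotency of $\A_\ast\D_\ast^k$ is automatic and none of the spectral estimates used in the non-nilpotent case appear. This matches the expected fact that nilpotent hyperbolic maps have no homotopy minimal periods beyond $k=1$.
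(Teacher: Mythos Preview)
Your argument is correct and reaches the same endpoint as the paper --- namely $\det(I-\A_\ast\D_\ast^k)=1$ for every $\A\in F$, after which the averaging formula and Theorems~\ref{thmRInf}, \ref{thmN=R} finish the job --- but the route is genuinely different. The paper invokes Lemma~3.1 of \cite{ddm05-1}, which for each $\A\in F$ produces $\B\in F$ and an integer $l$ with $(\B_\ast\D_\ast^k)^l=\D_\ast^{lk}$ and $\det(I-\A_\ast\D_\ast^k)=\det(I-\B_\ast\D_\ast^k)$; nilpotency of $\D_\ast^{lk}$ then forces all eigenvalues of $\B_\ast\D_\ast^k$ to vanish. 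You instead prove nilpotency of $\A_\ast\D_\ast^k$ directly, by extracting from Theorem~\ref{leemaps} the intertwining relation $\D_\ast\A_\ast=\A'_\ast\D_\ast$ (second component of $\varphi(\gamma)(\dd,\D)=(\dd,\D)\gamma$) and observing that the kernel filtration $V_i=\ker\D_\ast^i$ is therefore $F$-stable while $\D_\ast^k$ drops it by $k$ steps. Your approach is more self-contained, avoiding the external lemma entirely; the paper's approach is shorter once that lemma is granted. One small point: asserting that $\bar\varphi$ is a well-defined map $F\to F$ amounts to $\varphi(\Gamma\cap G)\subseteq\Gamma\cap G$, which is a known fact for almost-Bieberbach groups but which you use without comment. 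Your filtration argument does not actually need this --- it only needs that for each $\A\in F$ there is \emph{some} invertible $\A'$ with $\D_\ast\A_\ast=\A'_\ast\D_\ast$, and iterating this already gives $\D_\ast^i\A_\ast=(\text{invertible})\,\D_\ast^i$ --- so you could sidestep the issue entirely by phrasing it that way.
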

\begin{proof}
By combining Theorem \ref{thmRInf} and Theorem \ref{thmN=R} we know that every fixed point class of $f^k$ is essential if and only if for all $\A\in F$ (where $F$ is the holonomy group of our infra-nilmanifold),$$\det(I-\A_\ast\D_\ast^k)\neq 0.$$By Lemma 3.1 in \cite{ddm05-1}, we know that there exists $\B \in F$ and an integer $l$, such that $$(\B_\ast\D_\ast^k)^l=\D_\ast^{lk} \textrm{ and } \det(I-\A_\ast\D_\ast^k)=\det(I-\B_\ast\D_\ast^k).$$Note that $\det(I-\B_\ast\D_\ast^k)=0$ implies that $\B_\ast\D_\ast^k$ has an eigenvalue $1$, but this would mean that $\D_\ast^{lk}$ has an eigenvalue $1$, which is in contradiction with the hyperbolicity of our map. Therefore, $R(f^k)=N(f^k)$.

\medskip

Note that $\D_\ast$ only has eigenvalues $0$. The fact that there exists $\B \in F$ and an integer $l$, such that $$(\B_\ast\D_\ast^k)^l=\D_\ast^{lk} \textrm{ and } \det(I-\A_\ast\D_\ast^k)=\det(I-\B_\ast\D_\ast^k),$$ implies that $\B_\ast\D_\ast^k$ only has eigenvalues $0$. As a consequence $\det(I-\A_\ast\D_\ast^k)=\det(I-\B_\ast\D_\ast^k)=1$, for all $\A\in F$. By applying the main formula from \cite{ll06-1}, an easy computation shows that $N(f^k)=1$.
\end{proof}
In \cite{fl13-1}, we can find the following proposition.

\begin{proposition}\label{propaff}
If $\overline{(\dd,\D)}:M\to M$ is a continuous map on an infra-nilmanifold, induced by an affine map, then every non-empty fixed point class is path-connected and
\begin{enumerate}
\item every essential fixed point class of $\overline{(\dd,\D)}$ consists of exactly one point.
\item every non-essential fixed point class of $\overline{(\dd,\D)}$ is empty or consists of infinitely many points.
\end{enumerate} 
\end{proposition}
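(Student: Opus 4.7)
The plan is to pass to the universal cover $G$ and analyse $\Fix(\gamma(\dd,\D))\subseteq G$ for each $\gamma\in\Gamma$, since every fixed point class of $\overline{(\dd,\D)}$ is, by definition, the image under the covering projection $p:G\to M$ of such a set. Writing $\gamma=(g_\gamma,\alpha_\gamma)$ and setting $\varphi=\alpha_\gamma\circ\D\in\Endo(G)$ and $d=g_\gamma\alpha_\gamma(\dd)$, the transformation $\gamma(\dd,\D)$ acts on $G$ by $h\mapsto d\,\varphi(h)$.

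First I would establish a coset description. If $h_0$ is any fixed point, then because $\varphi$ is a group homomorphism, $h=h_0k$ is again fixed iff $\varphi(k)=k$, so $\Fix(\gamma(\dd,\D))=h_0\cdot\Fix(\varphi)$. Next, because $G$ is connected, simply connected and nilpotent, the exponential map $\exp:\lie\to G$ is a diffeomorphism intertwining $\varphi$ with its differential $\varphi_\ast$. Hence $\Fix(\varphi)=\exp(\ker(\Id-\varphi_\ast))$, and since $\ker(\Id-\varphi_\ast)$ is a Lie subalgebra of $\lie$ (because $\varphi_\ast$ is a Lie algebra endomorphism), $\Fix(\varphi)$ is a connected Lie subgroup of $G$. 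The coset $h_0\cdot\Fix(\varphi)$ is therefore path-connected, and so is its continuous image in $M$.

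The dichotomy between (1) and (2) then follows from whether $\det(\Id-\varphi_\ast)=0$. If $\det(\Id-\varphi_\ast)\neq 0$, then $\Fix(\varphi)=\{e\}$, so the class is either empty or consists of the single point $p(h_0)$. Because $h_0$ is an isolated fixed point of an affine map, its local fixed point index equals $\mathrm{sign}\,\det(\Id-\varphi_\ast)\neq 0$, so the class is essential, proving (1). If instead $\det(\Id-\varphi_\ast)=0$, then $\Fix(\varphi)$ has positive dimension. Picking any $0\neq X\in\ker(\Id-\varphi_\ast)$, the curve $t\mapsto h_0\exp(tX)$ lies entirely in $\Fix(\gamma(\dd,\D))$, and it cannot project to a constant in $M$: otherwise $\exp(tX)$ would lie in the discrete set $h_0^{-1}\Gamma h_0$ for every $t$, forcing $X=0$. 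The non-empty class therefore contains a continuum of points.

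The step I expect to be the main obstacle is the last piece of (2): formally upgrading "$\det(\Id-\varphi_\ast)=0$" to "this class is non-essential", not merely "its index contributions cancel in the sum". The cleanest route is to invoke the averaging formula $N(f)=\frac{1}{\#F}\sum_{\A\in F}|\det(\Id-\A_\ast\D_\ast)|$ from \cite{ll06-1}, which assigns a strictly positive contribution to exactly those holonomy classes with $\det(\Id-\A_\ast\D_\ast)\neq 0$, so the essential classes are precisely the single-point classes produced above; alternatively, one can perturb the affine map in a neighbourhood of the positive-dimensional coset inside $G$ and then average over the holonomy $F$ to obtain a homotopy on $M$ that eliminates the class entirely.
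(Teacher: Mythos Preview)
The paper does not prove this proposition; it is quoted with attribution to \cite{fl13-1} and no argument is supplied. So there is no proof in the paper to compare your attempt against.

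On your proposal itself: the coset description $\Fix(\gamma(\dd,\D))=h_0\cdot\Fix(\varphi)$ and the identification $\Fix(\varphi)=\exp\bigl(\ker(\Id-\varphi_\ast)\bigr)$ are correct and yield path-connectedness cleanly. Note, however, that you have the roles of (1) and (2) crossed. Your index computation in the case $\det(\Id-\varphi_\ast)\neq 0$ shows that every single-point class is essential; by contraposition this is precisely (2), not (1). Conversely, what you call ``the last piece of (2)'' --- showing that $\det(\Id-\varphi_\ast)=0$ forces the class to be non-essential --- is exactly what is needed for (1). Your first proposed route to this, via the averaging formula of \cite{ll06-1}, does not work as stated: that formula sums $|\det(\Id-\A_\ast\D_\ast)|$ over the holonomy group $F$, not over Reidemeister classes, so it returns the total $N(f)$ but does not by itself identify \emph{which} classes are essential. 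A direct argument closes the gap: in exponential coordinates centred at $h_0$ the lifted map is exactly the linear map $\varphi_\ast$, and since $\Id-\varphi_\ast$ has image contained in a proper subspace its local degree around the (positive-dimensional) fixed set is $0$, so the class has index $0$ and is non-essential. One cosmetic point: ``$h_0^{-1}\Gamma h_0$'' is not well-formed since $\Gamma\subseteq\Aff(G)$ rather than $G$; rephrase via the $\Gamma$-orbit of $h_0$ and your discreteness argument goes through.
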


\begin{theorem}
When $f$ is a hyperbolic map on an infra-nilmanifold with affine homotopy lift $(\dd,\D)$, such that $\D_\ast$ is nilpotent, then $$\HPer(f)=\{1\}.$$
\end{theorem}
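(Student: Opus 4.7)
First I would replace $f$ by its affine representative $\overline{(\dd,\D)}$: because $\HPer$ depends only on the homotopy class of $f$ and every map on an infra-nilmanifold is homotopic to the map induced by its affine homotopy lift, I may assume throughout that $f=\overline{(\dd,\D)}$. Then $f^k=\overline{(\dd,\D)^k}$ is again induced by an affine map, whose linear part $\D^k$ remains nilpotent, so $f^k$ still satisfies both the hyperbolicity and the nilpotency hypothesis. Applying Proposition~\ref{propnilp} to $f$ and to each of its iterates, one obtains $N(f^l)=R(f^l)=1$ for every $l\geq 1$.

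The inclusion $1\in\HPer(f)$ is immediate from $N(f)=1>0$, since every map homotopic to $f$ must then have at least one fixed point, and a fixed point is automatically a point of pure period $1$.

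For the opposite inclusion, it suffices to exhibit, for each $k>1$, a single map homotopic to $f$ with $P_k=\emptyset$; I claim that the affine representative itself does the job. Since $R(f^k)=1$, the map $f^k$ has exactly one fixed point class, and since $N(f^k)=1$ that class is essential. Proposition~\ref{propaff} then forces this class to consist of exactly one point. On the other hand, Proposition~\ref{propaff} applied to $f$ gives that the unique essential fixed point class of $f$ is also a singleton $\{x_0\}$, and $x_0$ is automatically a fixed point of $f^k$, so it must lie in the unique fixed point class of $f^k$. Hence $\Fix(f^k)=\{x_0\}=\Fix(f)$, and consequently $P_k(f)=\emptyset$ for every $k>1$.

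The only thing worth checking is that Propositions~\ref{propnilp} and~\ref{propaff} genuinely apply to every iterate $f^k$; this is automatic because $(\dd,\D)^k$ is affine and $\D^k$ is again nilpotent (hence without eigenvalues of modulus $1$), so both hypotheses pass to each iterate. Beyond this routine verification there is no real obstacle: the observation $R(f^k)=1$ collapses the entire fixed point set of every iterate onto a single point, and the theorem follows at once.
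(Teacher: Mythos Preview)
Your proposal is correct and follows essentially the same route as the paper: use Proposition~\ref{propnilp} to get $N(f^k)=R(f^k)=1$ for all $k$, invoke Proposition~\ref{propaff} to conclude that $\Fix(\overline{(\dd,\D)}^k)$ is a single point, and then observe $\Fix(\overline{(\dd,\D)})\subset\Fix(\overline{(\dd,\D)}^k)$ forces these sets to coincide, so the affine representative has no periodic points of pure period greater than~$1$. The only difference is that you spell out the role of $R(f^k)=1$ (one fixed point class) versus $N(f^k)=1$ (that class is essential) and verify that iterates remain affine and nilpotent, whereas the paper leaves these points implicit; note also that Proposition~\ref{propnilp} already covers all iterates at once, so re-applying it to each $f^k$ separately is unnecessary.
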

\begin{proof}
Let $\overline{(\dd,\D)}$ be the induced map of $(\dd,\D)$ on our infra-nilmanifold. It suffices to show that $\Per(\overline{(\dd,\D)})=\{1\}$, because $N(f)=1$ immediately implies that $1\in \HPer(f)$.

\medskip

By Proposition \ref{propaff} and Proposition \ref{propnilp}, we know that $\Fix(\overline{(\dd,\D)}^k)$ consists of precisely one point, for all $k>0$. Because, for all $k>0$, it holds that $$\Fix(\overline{(\dd,\D)})\subset \Fix(\overline{(\dd,\D)}^k),$$we know that $\Fix(\overline{(\dd,\D)}^k)=\Fix(\overline{(\dd,\D)})$, for all $k>0$. From this it follows, that  $\overline{(\dd,\D)}$ only has periodic points of pure period $1$.
\end{proof}
\bibliography{G:/algebra/ref}
\bibliographystyle{G:/algebra/ref}

\end{document}